\documentclass[11pt]{amsart}
\usepackage[letterpaper,margin=1.12in]{geometry}
\usepackage[T1]{fontenc}
\usepackage[utf8]{inputenc}
\usepackage{amsmath,amssymb,amsthm,mathtools}
\usepackage{enumitem}
\usepackage{booktabs}
\usepackage{array}
\usepackage{microtype}
\usepackage[colorlinks=true,linkcolor=blue,citecolor=blue,urlcolor=blue]{hyperref}
\usepackage[capitalise,noabbrev]{cleveref}
\numberwithin{equation}{section}
\setlist{leftmargin=2em}

\newtheorem{theorem}{Theorem}[section]
\newtheorem{proposition}[theorem]{Proposition}
\newtheorem{corollary}[theorem]{Corollary}
\newtheorem{lemma}[theorem]{Lemma}
\newtheorem{definition}[theorem]{Definition}

\theoremstyle{remark}
\newtheorem{remark}[theorem]{Remark}

\newtheorem{question}[theorem]{Question}

\crefname{theorem}{Theorem}{Theorems}
\Crefname{theorem}{Theorem}{Theorems}
\crefname{proposition}{Proposition}{Propositions}
\Crefname{proposition}{Proposition}{Propositions}
\crefname{corollary}{Corollary}{Corollaries}
\Crefname{corollary}{Corollary}{Corollaries}
\crefname{lemma}{Lemma}{Lemmas}
\Crefname{lemma}{Lemma}{Lemmas}
\crefname{definition}{Definition}{Definitions}
\Crefname{definition}{Definition}{Definitions}
\crefname{remark}{Remark}{Remarks}
\Crefname{remark}{Remark}{Remarks}
\crefname{example}{Example}{Examples}
\Crefname{example}{Example}{Examples}
\crefname{question}{Question}{Questions}
\Crefname{question}{Question}{Questions}
\crefname{section}{Section}{Sections}
\Crefname{section}{Section}{Sections}
\crefname{equation}{Equation}{Equations}
\Crefname{equation}{Equation}{Equations}
\crefformat{theorem}{Theorem~#2#1#3}
\Crefformat{theorem}{Theorem~#2#1#3}
\crefmultiformat{theorem}{Theorems~#2#1#3}{ and~#2#1#3}{, #2#1#3}{, and~#2#1#3}
\Crefmultiformat{theorem}{Theorems~#2#1#3}{ and~#2#1#3}{, #2#1#3}{, and~#2#1#3}
\crefformat{lemma}{Lemma~#2#1#3}
\Crefformat{lemma}{Lemma~#2#1#3}
\crefmultiformat{lemma}{Lemmas~#2#1#3}{ and~#2#1#3}{, #2#1#3}{, and~#2#1#3}
\Crefmultiformat{lemma}{Lemmas~#2#1#3}{ and~#2#1#3}{, #2#1#3}{, and~#2#1#3}
\crefformat{corollary}{Corollary~#2#1#3}
\Crefformat{corollary}{Corollary~#2#1#3}
\crefmultiformat{corollary}{Corollaries~#2#1#3}{ and~#2#1#3}{, #2#1#3}{, and~#2#1#3}
\Crefmultiformat{corollary}{Corollaries~#2#1#3}{ and~#2#1#3}{, #2#1#3}{, and~#2#1#3}
\crefformat{proposition}{Proposition~#2#1#3}
\Crefformat{proposition}{Proposition~#2#1#3}
\crefmultiformat{proposition}{Propositions~#2#1#3}{ and~#2#1#3}{, #2#1#3}{, and~#2#1#3}
\Crefmultiformat{proposition}{Propositions~#2#1#3}{ and~#2#1#3}{, #2#1#3}{, and~#2#1#3}
\crefformat{definition}{Definition~#2#1#3}
\Crefformat{definition}{Definition~#2#1#3}
\crefformat{remark}{Remark~#2#1#3}
\Crefformat{remark}{Remark~#2#1#3}
\crefformat{question}{Question~#2#1#3}
\Crefformat{question}{Question~#2#1#3}
\crefformat{section}{Section~#2#1#3}
\Crefformat{section}{Section~#2#1#3}
\crefmultiformat{section}{Sections~#2#1#3}{ and~#2#1#3}{, #2#1#3}{, and~#2#1#3}
\Crefmultiformat{section}{Sections~#2#1#3}{ and~#2#1#3}{, #2#1#3}{, and~#2#1#3}

\newcommand{\QQ}{\mathbb Q}
\newcommand{\ZZ}{\mathbb Z}
\newcommand{\QM}{\widetilde M}

\newcommand{\Lam}{\Lambda^{\ast}}
\newcommand{\qbr}[1]{\left\langle #1\right\rangle_q}
\newcommand{\wt}{\operatorname{wt}}
\newcommand{\im}{\operatorname{im}}
\newcommand{\Span}{\operatorname{span}}
\newcommand{\End}{\operatorname{End}}
\newcommand{\Hom}{\operatorname{Hom}}
\newcommand{\rank}{\operatorname{rank}}
\newcommand{\Id}{\operatorname{id}}

\newcommand{\Top}{\operatorname{Top}}

\title[Hecke lifts for partition q-brackets]{Obstructions and kernel transport for Hecke lifts of partition q-brackets}
\author{Levi Segal}
\address{Stanford Online High School, 415 Broadway Academy Hall, Floor 2, 8853, 415 Broadway, Redwood City, CA 94063, USA}
\email{levisegal0@gmail.com}
\date{}

\subjclass[2020]{Primary 11F25, 11F11; Secondary 05A17, 05A19}
\keywords{partition q-brackets, shifted symmetric functions, quasimodular forms, Hecke operators, Zagier lowering, Bloch--Okounkov theorem}

\begin{document}
\begin{abstract}
By the Bloch--Okounkov theorem, the \(q\)-bracket sends shifted
symmetric functions on partitions to quasimodular forms. Following a
question of van Ittersum, we study whether the Hecke action on
quasimodular forms lifts through this map, in the sense of exact lifts
\(A_{n,2k}\) with \(\qbr{A_{n,2k}f}=T_{n,2k}\qbr{f}\). Our main
theorem is that Zagier's lowering operator \(B=\frac12(D-\partial^2)\)
is injective on the genuine homogeneous subspace in every weight at
least \(4\), although it is not injective on the formal algebra. For
exact lifts compatible with lowering, this transports Hecke actions on
\(q\)-bracket kernels between adjacent weights, with divisibility
consequences for characteristic polynomials and \(p\)-adic constraints
on kernel eigenvalues. In fixed weight, we classify all exact lifts by
an action on the kernel and a kernel-valued cocycle relative to a
section. Two obstruction results show that no exact lift is an algebra
homomorphism, and that even strict compatibility with multiplication by
\(Q_2\) already fails in weight \(6\). Finally, we construct exact
lifts with scalar kernel action whenever the \(q\)-bracket image is
Hecke-stable, and exact rational computations give \(q\)-bracket
surjectivity through weight \(16\) and therefore existence of Hecke lift through weight \(16\).
\end{abstract}
\maketitle
\section{Introduction and main results}
\label{sec:intro}

Let \(\mathcal P\) denote the set of all partitions. If
\(f:\mathcal P\to\QQ\) is a function on partitions, its normalized
\(q\)-bracket is
\begin{equation}
\label{eq:qbracket}
        \qbr{f}
        =
        \frac{\sum_{\lambda\in\mathcal P} f(\lambda)q^{|\lambda|}}
             {\sum_{\lambda\in\mathcal P} q^{|\lambda|}}.
\end{equation}
Thus \(\qbr{f}\) is the generating series of the statistic \(f\),
normalized so that \(\qbr{1}=1\). If \(f\) has at most polynomial growth
in \(|\lambda|\), as will be the case for all functions considered here,
the series converges for \(|q|<1\), and setting \(q=e^{2\pi i\tau}\) one
may regard the \(q\)-bracket as a holomorphic function of
\(\tau\in\mathbb H\), the upper-half part of the complex plane.

There are some very simple, but nontrivial examples, producing modular objects through this map.
Consider the statistic \(Q_2(\lambda)=|\lambda|-\frac1{24}\). Since
\(\sum_\lambda|\lambda|\,q^{|\lambda|}=q\frac{d}{dq}\sum_\lambda
q^{|\lambda|}\), logarithmic differentiation of the partition generating
function gives
\begin{equation}
\label{eq:Q2-bracket}
        \qbr{Q_2}
        =
        \sum_{m\geq 1}\sigma_1(m)q^m-\frac1{24}
        =
        -\frac{E_2}{24},
\end{equation}
where \(\sigma_r(n)=\sum_{d\mid n}d^r\) and
\(E_2=1-24\sum_{m\geq 1}\sigma_1(m)q^m\) is the quasimodular Eisenstein
series of weight \(2\).

Some modular behavior is to be expected though. The denominator of
\eqref{eq:qbracket} is the usual partition generating function,
\[
        \sum_{\lambda\in\mathcal P}q^{|\lambda|}
        =
        \prod_{n\geq 1}(1-q^n)^{-1}
        =
        q^{1/24}\eta(\tau)^{-1},
\]
where \(\eta\) is the Dedekind eta-function, so that
\(\eta(\tau)^{-1}\qbr{f}=\sum_\lambda f(\lambda)q^{|\lambda|-1/24}\).
For simple statistics such as polynomials in \(|\lambda|\),
quasimodularity therefore follows from the modular behavior of \(\eta\)
together with the fact that the ring of quasimodular forms is closed
under \(q\frac{d}{dq}\), exactly as in \eqref{eq:Q2-bracket}. The
theorem of Bloch and Okounkov, proved in the course of computing the
character of the infinite wedge representation \cite{BlochOkounkov},
goes far beyond such cases. Zagier's shifted generators \(Q_j\), whose precise normalization
is recalled in \cref{sec:prelim} (the function \(Q_2\) above is one of
them, and \(Q_1=0\) identically on partitions), generate the algebra
\[
        \Lam=\QQ[Q_2,Q_3,Q_4,\ldots]
\]
of shifted symmetric functions, graded by \(\wt(Q_j)=j\). In Zagier's
normalization \cite{ZagierBO}, originally proven by Bloch and Okounkov\cite{BlochOkounkov}, the \(q\)-bracket of every homogeneous
shifted symmetric function of weight \(m\) is a quasimodular form of
weight \(m\) for \(\mathrm{SL}_2(\ZZ)\):
\[
        \qbr{\Lam_m}\subseteq\QM_m,
        \qquad
        \QM=\QQ[E_2,E_4,E_6].
\]
There is no a priori reason for the generating series of a partition
statistic to satisfy any transformation law. A single identity like
\eqref{eq:Q2-bracket} can be checked by hand; the content of the
theorem is that an entire polynomial algebra of partition statistics
lands inside \(\QM\), with the grading preserved.
The bracket has since been connected to multiple zeta values
\cite{BachmannvanIttersum}, to Eichler integrals of Eisenstein series
\cite{BringmannOnoWagner}, and to \(p\)-adic modular forms
\cite{GriffinJamesonTrebatLeder}. Van Ittersum characterized when the
bracket of a shifted symmetric function is modular rather than merely
quasimodular \cite{vanIttersumModular}, and proved symmetric and
congruence-subgroup versions of the theorem
\cite{vanIttersumSymmetric,vanIttersumCongruence}.

Following Zagier~\cite{ZagierBO}, it is convenient to keep \(Q_1\) as a formal
generator. Set
\[
        R=\QQ[Q_1,Q_2,Q_3,\ldots],
\]
graded in the same way, write \(R_m\) for the homogeneous part of weight
\(m\), and
\[
        L_m=\Lam\cap R_m
\]
for the genuine homogeneous subspace. The \(q\)-bracket extends to
\(R\), and every monomial containing \(Q_1\) has bracket zero, since
\(Q_1\) vanishes on partitions. In even weight, let
\[
        q_{2k}:R_{2k}\longrightarrow\QM_{2k},
        \qquad
        q_{2k}(f)=\qbr{f},
\]
and set
\[
        I_{2k}=\im(q_{2k}),
        \qquad
        K_{2k}=\ker q_{2k},
        \qquad
        K^\Lambda_{2k}=K_{2k}\cap L_{2k};
\]
odd-weight brackets vanish outright, since \(\QM\) has no odd-weight
part. The kernel is not an artifact of the formal enlargement: genuine
relations invisible to the \(q\)-bracket appear from weight \(6\) on.
The first is
\begin{equation}
\label{eq:g6-def}
        g_6=161Q_6-109Q_4Q_2+77Q_3^2+5Q_2^3,
        \qquad
        \qbr{g_6}=0,
\end{equation}
and their number grows quickly: in weights \(4,6,\ldots,16\) the genuine
kernels \(K^\Lambda_{2k}\) have dimensions \(0,1,3,7,14,26,45\)
(\cref{sec:computations}). In fact the kernel eventually dominates:
\(\dim L_{2k}=p(2k)-p(2k-1)\), where \(p\) is the partition function,
grows faster than any power of \(k\) by the Hardy--Ramanujan asymptotic
\cite{HardyRamanujan}, while \(\dim\QM_{2k}\) grows only
quadratically. In large weight, almost every shifted symmetric function
lies in the kernel of the \(q\)-bracket.

Quasimodular forms carry Hecke operators. Hecke operators are among the
basic structures in the theory of modular forms: analytically, they are
averaging operators over finite sets of lattice transformations;
geometrically, they come from isogeny correspondences between elliptic
curves. At level one, in the normalization recalled in
\cref{sec:prelim}, they satisfy the composition law
\begin{equation}
\label{eq:hecke-comp}
        T_{m,w}T_{n,w}
        =
        \sum_{d\mid(m,n)}d^{w-1}T_{mn/d^2,w}
\end{equation}
and act on the Eisenstein series by \(T_{n,w}E_w=\sigma_{w-1}(n)E_w\)
for \(w\geq 4\). Their simultaneous eigenvectors organize spaces of
modular forms, and their eigenvalues, which for a normalized
eigenform are its Fourier coefficients, carry the arithmetic. On quasimodular
forms the operators may be defined by the same averaging, or through the
almost-holomorphic lifts of Kaneko and Zagier
\cite{KanekoZagier,Zagier123}; for the Hecke action on quasimodular
forms see also Movasati \cite{Movasati}.

This paper concerns a question posed by van Ittersum on the problem
list of the Cologne conference on modular forms and \(q\)-series
\cite{vanIttersumOpen}: can the Hecke action on quasimodular forms be
realized on shifted symmetric functions through the \(q\)-bracket, or
can one prove that natural lifts do not exist?

\begin{definition}[Exact lifts]
\label{def:exact-lift}
An \emph{exact lift} of \(T_{n,2k}\) is a linear map
\(A_{n,2k}:R_{2k}\to R_{2k}\) such that
\begin{equation}
\label{eq:exact-lift}
        q_{2k}A_{n,2k}=T_{n,2k}q_{2k}.
\end{equation}
A family \(\{A_{n,2k}\}_{n\geq 1}\) of exact lifts is a \emph{full
exact Hecke lift} in weight \(2k\) if it satisfies the relations
\eqref{eq:hecke-comp}. The same terms are used verbatim for operators
on \(\Lam\). When \(I_{2k}\) is stable under \(T_{n,2k}\), we write
\(T^{\mathrm{quot}}_{n,2k}=T_{n,2k}|_{I_{2k}}\) for the induced Hecke
operator on the \(q\)-bracket image; equivalently, this is the Hecke
action on the quotient \(R_{2k}/K_{2k}\cong I_{2k}\).
\end{definition}

An
exact lift would place the Hecke action on the partition side of the
bracket: the operator would act on the statistic \(f\) itself, before
its generating series is formed. Hecke eigenforms, Eisenstein
components, and the Hecke relations could then be approached through
shifted symmetric functions and their relations, rather than only
through Fourier coefficients or polynomial expressions in
\(E_2,E_4,E_6\); conversely, the combinatorial structure of \(\Lam\) (its
generators, its grading, evaluation at particular partitions) would
bear directly on Hecke theory. In this sense, the lifting
problem asks whether the arithmetic symmetries of quasimodular forms
have a combinatorial realization before passing to generating series.
The question allows answers in both directions, either an explicit
construction or a proof that some class of lifts cannot exist, and the
results below contain both. The difficulty is the kernel. Since \(q_{2k}\) is not injective, the
action downstairs determines a lift only up to its behavior on
\(K_{2k}\) and up to correction terms with values in \(K_{2k}\); by
the dimension counts above, this undetermined part is eventually most
of the space.

One basic operator on quasimodular forms does admit a natural lift,
and much of this paper is built around it. Let
\(d:\QM_w\to\QM_{w-2}\) be the derivation determined by
\[
        d(E_2)=12,\qquad d(E_4)=d(E_6)=0,
\]
the lowering operator, which up to normalization is differentiation
with respect to \(E_2\). Zagier \cite{ZagierBO} constructed an operator
doing the same job upstairs: let \(\partial\) be the derivation of
\(R\) with \(\partial(Q_j)=Q_{j-1}\) (so \(\partial(Q_1)=Q_0=1\)), and
set
\begin{equation}
\label{eq:B-def}
        D=
        \sum_{a,b\geq 0}
        \binom{a+b}{a}Q_{a+b}
        \frac{\partial^2}{\partial Q_{a+1}\partial Q_{b+1}},
        \qquad
        B=\frac12(D-\partial^2).
\end{equation}
Then \(B:R_m\to R_{m-2}\), and Zagier's identity states that
\begin{equation}
\label{eq:zagier-lower}
        d\qbr{f}=\qbr{Bf}
        \qquad (f\in R).
\end{equation}
Thus the lifting question has an affirmative answer for at least one
natural operator on \(\QM\), provided one works in the formal
algebra. This is where the formal generator \(Q_1\) enters. The
operator \(B\) does not preserve the genuine algebra: for the kernel
element \(g_6\) of \eqref{eq:g6-def} one computes
\begin{equation}
\label{eq:Bg6}
        B(g_6)=32Q_3Q_1-15Q_2Q_1^2,
\end{equation}
in which every monomial contains \(Q_1\). If one insists on staying
inside genuine shifted symmetric functions and sets \(Q_1=0\) after
applying \(B\), the resulting operator therefore kills \(g_6\) and is
not injective. On the formal algebra, \(B\) is not injective either: a
direct check gives \(B(Q_1^2)=0\). The main theorem of this paper is
that between these two failures there is a true statement: \(B\) is
injective on the genuine subspace, provided its image is allowed to be
formal.

\begin{theorem}[Injectivity of \(B\); see \cref{thm:Binj,cor:Binj-kernel}]
\label{thm:main-injectivity}
For every \(m\geq 4\), the restriction
\[
        B:L_m\longrightarrow R_{m-2}
\]
is injective. In particular, \(B\) embeds the genuine \(q\)-bracket
kernel \(K^\Lambda_{2k}\) into the formal kernel \(K_{2k-2}\) for every
\(k\geq 2\).
\end{theorem}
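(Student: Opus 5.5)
The plan is to study \(Bf\) for \(f\in L_m\) (so \(f\) is free of \(Q_1\)) by expanding it in powers of the formal variable \(Q_1\). The aim is to show that already the parts of \(Bf\) involving \(Q_1\) force \(f=0\). Write \(\partial_j=\partial/\partial Q_j\), so that \(\partial=\sum_j Q_{j-1}\partial_j\).

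First I would compute the \(Q_1\)-expansion of each piece. In \(D\), the only terms that can produce a factor \(Q_1\) are those with \(a+b=1\). These involve \(\partial_1\), which kills \(f\). Hence \(Df\) is \(Q_1\)-free, and the same holds for the \(a=b=0\) term. On the other hand,
\[
\partial^2 f=\sum_{i,j}Q_{i-1}Q_{j-1}\,\partial_i\partial_j f+\sum_j Q_{j-2}\,\partial_j f .
\]
From this one reads off the \(Q_1\)-coefficients of \(Bf\).
\begin{itemize}
\item The \(Q_1^2\)-coefficient is \(-\tfrac12\partial_2^2 f\).
\item The \(Q_1\)-coefficient is \(-\bigl(\sum_{j\ge 3}Q_{j-1}\partial_2\partial_j f+\tfrac12\partial_3 f\bigr)\).
\item The \(Q_1\)-free part is \(\tfrac12\bigl(Df-(\partial^2 f)|_{Q_1=0}\bigr)\).
\end{itemize}
Since all three coefficients are \(Q_1\)-free, \(Bf=0\) is equivalent to the simultaneous vanishing of all three.

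Suppose \(Bf=0\). The \(Q_1^2\)-equation gives \(f=f_0+Q_2f_1\), where \(f_0,f_1\) involve only \(Q_3,Q_4,\dots\). Substituting into the \(Q_1\)-equation and separating by \(Q_2\)-degree gives two conditions. The \(Q_2\)-linear part gives \(\tfrac32\partial_3 f_1=0\). The \(Q_2\)-free part gives
\[
\sum_{j\ge4}Q_{j-1}\partial_j f_1+\tfrac12\partial_3 f_0=0 .
\]
The first conditions are "triangular": the operator \(\sum_{j\ge4}Q_{j-1}\partial_j\) is the restriction of \(\partial\) to the subalgebra generated by \(Q_3,Q_4,\ldots\), with \(Q_3\mapsto 0\). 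Next I would use the \(Q_1\)-free equation, again split by \(Q_2\)-degree, to get a second relation between \(f_0\) and \(f_1\). The goal is to show the resulting linear system on the pair \((f_0,f_1)\) has only the trivial solution in weight \(m\ge4\).

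To organize this, I would filter by the degree in \(Q_3\), or more generally use a monomial order that makes the combined operator upper triangular with nonzero diagonal. The model is an Euler-type argument: the relation \(\partial_3 f_0=-2\,\partial' f_1\), with \(\partial'=\sum_{j\ge4}Q_{j-1}\partial_j\), together with \(\partial_3 f_1=0\), should feed into the \(Q_1\)-free equation. There the \(D\)-terms \(\binom{a+b}{a}Q_{a+b}\partial_{a+1}\partial_{b+1}\) contribute a leading coefficient that is a positive combination of binomials, so it cannot cancel. The small-weight cases \(m=4,5\) (and any base cases the induction needs) I would check by hand from \(\dim L_4=2\), \(\dim L_5=2\). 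The "in particular" clause then follows at once from the theorem together with \eqref{eq:zagier-lower}: if \(\qbr{f}=0\), then \(\qbr{Bf}=d\qbr{f}=0\), so \(B\) maps \(K^\Lambda_{2k}\) injectively into \(K_{2k-2}\).

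The main obstacle is the last step. One has to show that the two \(Q_1\)-constraints together with the \(Q_1\)-free constraint admit no nonzero common solution. In particular one must exclude nontrivial \(f_0\) with \(\partial_3 f_0=0\) and \(f_1=0\), for which only the \(Q_1\)-free equation \(Df_0=(\partial^2 f_0)|_{Q_1=0}\) remains. For that case I would first try comparing the terms of top degree in the highest-index generator \(Q_r\) occurring in \(f_0\). There \(\partial^2\) lowers the index while \(D\) produces \(Q_{a+b}\) times second derivatives, and I expect the leading monomials not to match.
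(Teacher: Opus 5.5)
There is a genuine gap. The step you flag as ``the main obstacle'' is in fact the whole theorem, and neither mechanism you suggest for it is established. Your $Q_1$-coefficient computations are correct. However, they carry no information on the subalgebra $\QQ[Q_4,Q_5,\ldots]$: for such $f$ one has $\partial_2 f=\partial_3 f=0$, so $Bf$ is automatically $Q_1$-free, and everything rests on the unproved equation $Df=\partial^2 f$.

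For that equation, your expectation that the $D$-terms give a leading coefficient that is ``a positive combination of binomials, so it cannot cancel'' is false. Contributions from different monomials with the same pair-sum can cancel. Indeed $D(Q_4Q_6)=112\,Q_8$ and $D(Q_5^2)=140\,Q_8$. So $f=5Q_4Q_6-4Q_5^2\in L_{10}$ has $Df=0$, and all of $Bf=-\tfrac12(5Q_2Q_6+2Q_3Q_5-3Q_4^2)$ comes from $\partial^2$. A top-index comparison is also entangled: $D$ raises indices ($Q_iQ_j\mapsto Q_{i+j-2}$) and can produce the current top generator from lower-index monomials. You have not shown that any ordering makes the combined system triangular with nonzero diagonal.

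The missing idea is to filter by monomial length rather than by $Q_1$-degree. The operator $D$ replaces two factors by at most one, so it strictly lowers length and disappears from the associated graded. The length-preserving part of $\partial$ is the derivation $\delta$ with $\delta(Q_j)=Q_{j-1}$ for $j\ge2$ and $\delta(Q_1)=0$. Hence the top-length component of $Bf$ is $-\tfrac12\delta^2 f_r$, where $f_r$ is the top-length component of $f$. It therefore suffices to show that $\delta^2$ is injective on each length-$r$ piece of $L_m$.

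To see this, send $Q_j\mapsto x^{j-1}/(j-1)!$ and symmetrize in $x_1,\dots,x_r$. This turns $\delta$ into $\Delta=\sum_i\partial/\partial x_i$. Since all indices are at least $2$, the image $F$ is divisible by $x_1\cdots x_r$. If $\Delta^2F=0$, then $t\mapsto F(x_1+t,\dots,x_r+t)$ is affine in $t$, yet it vanishes at $t=-x_1,\dots,-x_r$. For $r\ge2$ and distinct $x_i$ this forces $F=0$. For $r=1$, the element $Q_m$ maps to $x^{m-1}/(m-1)!$, whose second derivative is nonzero for $m\ge4$.

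Your treatment of the ``in particular'' clause via $\qbr{Bf}=d\qbr{f}$ is fine.
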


Concretely, no genuine relation among the shifted generators dies
under lowering, so the \(q\)-bracket kernels in adjacent weights are
linked. The proof is self-contained. Passing to the
associated graded algebra for the filtration of \(R\) by monomial length reduces the
statement to the injectivity of the square of a first-order
differential operator on symmetric polynomials divisible by
\(x_1\cdots x_r\), which is checked directly. For this paper, what matters is the effect on kernels. Applying \(d\) to the Hecke operators gives
the twisted commutation rule
\begin{equation}
\label{eq:dT-twist}
        d(T_{n,2k}F)=nT_{n,2k-2}(dF)
        \qquad (F\in\QM_{2k}),
\end{equation}
so for exact lifts in adjacent weights the combination
\(BA_{n,2k}-nA_{n,2k-2}B\) takes values in the kernel \(K_{2k-2}\)
(\cref{lem:twisted-lowering}). Call a family of exact lifts
\emph{kernel-strict} if the lifts preserve the genuine kernels and this
combination vanishes there, that is,
\(BA_{n,2k}u=nA_{n,2k-2}Bu\) for all \(u\in K^\Lambda_{2k}\)
(\cref{def:kernel-strict}). For kernel-strict lifts,
\cref{thm:main-injectivity} transports structure to the lower weight.
The subspace \(W_{2k-2}=B(K^\Lambda_{2k})\) of \(K_{2k-2}\) is
Hecke-stable, the action of \(A_{n,2k}\) on \(K^\Lambda_{2k}\) is
conjugate under \(B\) to \(n\) times the action of \(A_{n,2k-2}\) on
\(W_{2k-2}\), and after rescaling the variable the characteristic
polynomial upstairs divides the characteristic polynomial on
\(K_{2k-2}\) (\cref{thm:kernel-transport,thm:spectral-divisibility}).
If in addition the action downstairs preserves a \(\ZZ_p\)-lattice,
then every eigenvalue of \(A_{p,2k}\) on \(K^\Lambda_{2k}\) is
divisible by \(p\); in particular, the genuine kernel action of a
kernel-strict lift can never be the Eisenstein scalar \(1+p^{2k-1}\)
(\cref{cor:padic}).

The remaining results work in a fixed weight and describe the possible
exact lifts there. Suppose
\(I_{2k}\) is stable under \(T_{n,2k}\); by the computations of
\cref{sec:computations} this holds, with \(I_{2k}=\QM_{2k}\), in every
even weight up to \(16\). Choosing a linear section
\(s_{2k}:I_{2k}\to R_{2k}\) of \(q_{2k}\) splits
\(R_{2k}=s_{2k}(I_{2k})\oplus K_{2k}\).

\begin{theorem}[Fixed-weight structure; see
\cref{thm:block-classification,thm:cocycle}]
\label{thm:main-structure}
Relative to the splitting \(R_{2k}=s_{2k}(I_{2k})\oplus K_{2k}\), the
exact lifts of \(T_{n,2k}\) are exactly the operators
\[
        A_{n,2k}
        =
        \begin{pmatrix}
        T^{\mathrm{quot}}_{n,2k} & 0\\
        C_{n,2k} & N_{n,2k}
        \end{pmatrix},
        \qquad
        N_{n,2k}\in\End(K_{2k}),
        \quad
        C_{n,2k}\in\Hom(I_{2k},K_{2k}).
\]
A family of such operators satisfies the Hecke relations
\eqref{eq:hecke-comp} if and only if the kernel operators \(N_{n,2k}\)
satisfy \eqref{eq:hecke-comp} on \(K_{2k}\) and the maps \(C_{n,2k}\)
satisfy
\begin{equation}
\label{eq:cocycle}
        C_{m,2k}T^{\mathrm{quot}}_{n,2k}
        +
        N_{m,2k}C_{n,2k}
        =
        \sum_{d\mid(m,n)}d^{2k-1}C_{mn/d^2,2k}.
\end{equation}
\end{theorem}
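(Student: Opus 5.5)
The plan is pure linear algebra relative to the splitting \(R_{2k}=s_{2k}(I_{2k})\oplus K_{2k}\). I identify \(s_{2k}(I_{2k})\) with \(I_{2k}\) via \(q_{2k}\), which restricts to an isomorphism \(s_{2k}(I_{2k})\to I_{2k}\) since \(q_{2k}s_{2k}=\Id\). Under this identification, \(q_{2k}\) becomes the projection \((x,u)\mapsto x\) onto the first component. I then write an arbitrary linear map \(A:R_{2k}\to R_{2k}\) as a block matrix \(\begin{pmatrix}\alpha&\beta\\ \gamma&\delta\end{pmatrix}\), with \(\alpha\in\End(I_{2k})\), \(\beta\in\Hom(K_{2k},I_{2k})\), \(\gamma\in\Hom(I_{2k},K_{2k})\) and \(\delta\in\End(K_{2k})\).

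For the classification, I compute both sides of \eqref{eq:exact-lift} on a vector \((x,u)\). The left side is \(q_{2k}A(x,u)=\alpha x+\beta u\). The right side is \(T_{n,2k}q_{2k}(x,u)=T_{n,2k}x\), which lies in \(I_{2k}\) because \(I_{2k}\) is assumed \(T_{n,2k}\)-stable, so it equals \(T^{\mathrm{quot}}_{n,2k}x\). Taking \(u=0\) gives \(\alpha=T^{\mathrm{quot}}_{n,2k}\), and taking \(x=0\) gives \(\beta=0\). The blocks \(\gamma=C_{n,2k}\) and \(\delta=N_{n,2k}\) are unconstrained. Conversely, every matrix of the stated form satisfies \eqref{eq:exact-lift} by the same computation. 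This proves the first part.

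For the Hecke relations, I multiply block lower-triangular matrices:
\[
A_{m,2k}A_{n,2k}=\begin{pmatrix}T^{\mathrm{quot}}_{m,2k}T^{\mathrm{quot}}_{n,2k}&0\\ C_{m,2k}T^{\mathrm{quot}}_{n,2k}+N_{m,2k}C_{n,2k}&N_{m,2k}N_{n,2k}\end{pmatrix}.
\]
The right side of \eqref{eq:hecke-comp}, namely \(\sum_{d\mid(m,n)}d^{2k-1}A_{mn/d^2,2k}\), is again lower triangular, with blocks \(\sum d^{2k-1}T^{\mathrm{quot}}_{mn/d^2,2k}\), \(\sum d^{2k-1}C_{mn/d^2,2k}\) and \(\sum d^{2k-1}N_{mn/d^2,2k}\). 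I compare the blocks one at a time:
\begin{itemize}
\item The upper-left block holds automatically. The true Hecke operators satisfy \eqref{eq:hecke-comp} on \(\QM_{2k}\), and restricting to the stable subspace \(I_{2k}\) preserves this.
\item The upper-right blocks are both zero.
\item The lower-right block gives exactly the relations for the \(N_{n,2k}\).
\item The lower-left block gives exactly \eqref{eq:cocycle}.
\end{itemize}
Since a block matrix is zero precisely when every block is zero, this gives the equivalence.

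The only point needing care is the identification of \(s_{2k}(I_{2k})\) with \(I_{2k}\), together with the use of stability of \(I_{2k}\) so that \(T^{\mathrm{quot}}\) is defined and inherits the composition law. There is no genuine obstacle beyond this bookkeeping.
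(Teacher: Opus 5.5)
Your proposal is correct and follows essentially the same route as the paper. Exactness forces the upper-left block to be \(T^{\mathrm{quot}}_{n,2k}\) and the upper-right block to vanish; the paper phrases this as \(A_{n,2k}(K_{2k})\subseteq K_{2k}\) and defines \(C_{n,2k}(\phi)=A_{n,2k}s_{2k}(\phi)-s_{2k}(T_{n,2k}\phi)\) directly, rather than parametrizing a general block matrix. The Hecke relations are then compared block by block after multiplying the lower-triangular block matrices, with the upper-left block holding automatically because \(I_{2k}\) is Hecke-stable.
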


Equation \eqref{eq:cocycle} is a cocycle identity in the usual sense:
replacing the section \(s_{2k}\) by \(s_{2k}+h\) replaces \(C_{n,2k}\)
by \(C_{n,2k}+N_{n,2k}h-hT^{\mathrm{quot}}_{n,2k}\), a coboundary. A
full exact lift in weight \(2k\) is therefore the same thing as a Hecke
module structure on \(K_{2k}\) together with a short exact sequence of sequence of hecke modules
\[
        0\longrightarrow K_{2k}\longrightarrow R_{2k}
        \longrightarrow I_{2k}\longrightarrow 0.
\]
The proofs are direct block-matrix computations. The value of the
statement is that it locates the free choices of a lift, namely the
kernel action \(N_{n,2k}\) and the correction term \(C_{n,2k}\).

The obstruction results show that these correction terms are
unavoidable; they are the negative half promised above. We test two
conditions. The first asks the lift to be a ring map. The second asks
only that it commute with multiplication by the single generator
\(Q_2\), up to the weight-\(2\) Hecke eigenvalue \(\sigma_1(p)=1+p\).
Both fail, and the second fails already in weight \(6\). These questions are not assumed to be natural, rather tests and examples of the rigidity of potential hecke lifts.

\begin{theorem}[Obstructions; see
\cref{thm:no-multiplicative,thm:no-q2-semi}]
\label{thm:main-obstructions}
Let \(p\) be prime.
\begin{enumerate}[label=\textup{(\roman*)}]
\item No graded algebra endomorphism \(A_p:\Lam\to\Lam\) satisfies
\(\qbr{A_pf}=T_{p,\wt(f)}\qbr{f}\) for every homogeneous
\(f\in\Lam\).
\item Let \(e_4=Q_2^2+2Q_4\), so that \(\qbr{e_4}=\frac1{240}E_4\).
Then no exact lift on \(\Lam\) in weights \(4\) and \(6\) satisfies
\[
        A_{p,6}(Q_2e_4)=(1+p)Q_2A_{p,4}(e_4).
\]
\end{enumerate}
\end{theorem}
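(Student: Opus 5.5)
Both parts reduce to finite linear algebra in low weight, with $p$ kept as a parameter. The only inputs are explicit brackets and Hecke eigenvalues: $\qbr{Q_2}=-E_2/24$, $\qbr{e_4}=E_4/240$, and the brackets of the weight-$4$ and weight-$6$ monomials in $Q_2,Q_3,Q_4,Q_6$. All of these come from \cref{sec:prelim}, and $\QM_6$ is spanned by $E_6,E_2E_4,E_2^3$. We also need the action of $T_{p,w}$ on $\QM_w$ in low weight. In weight $2$ we have $T_{p,2}E_2=(1+p)E_2$. In weights $4$ and $6$ we use $T_{p,4}E_4=(1+p^3)E_4$ and $T_{p,6}E_6=(1+p^5)E_6$, together with the values on $E_2^2$, $E_2E_4$, $E_2^3$ computed from the averaging definition. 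The computation of these values can be organized through \eqref{eq:dT-twist} and the lowering operator $d$.

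\textbf{Part (i).} Suppose $A_p$ is a graded algebra endomorphism with the stated property. In weight $2$ we have $L_2=\QQ Q_2$, and $q_2$ is injective there, so $A_pQ_2=(1+p)Q_2$. In weight $3$, $L_3=\QQ Q_3$, so $A_pQ_3=cQ_3$ for some scalar $c$; the odd-weight condition is vacuous. In weight $4$, $L_4=\Span(Q_2^2,Q_4)$, and $q_4$ is injective with image $\QM_4$ because $K^\Lambda_4=0$. Hence $A_p$ on $L_4$ is forced to be the pullback of $T_{p,4}$. Multiplicativity forces
\[
A_p(Q_2^2)=(1+p)^2Q_2^2,
\]
so $T_{p,4}\qbr{Q_2^2}=(1+p)^2\qbr{Q_2^2}$. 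Now $\qbr{Q_2^2}$ is an explicit combination of $E_2^2$, $E_4$ and $DE_2$; it is not a multiple of $E_4$. I would check that it is not a $T_{p,4}$-eigenvector with eigenvalue $(1+p)^2$. The eigenvalues of $T_{p,4}$ on $\QM_4$ are $1+p^3$, and on the image of $D$ or the $E_2^2$ part something like $p(1+p)$. Both differ from $(1+p)^2$ for every $p$.

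If this contradiction somehow fails in weight $4$, the fallback is weight $6$. There $A_p$ is determined on $Q_2^3$, $Q_3^2$, $Q_2Q_4$, and only $Q_6$ is free. Imposing exactness on $\Span(Q_2^3,Q_3^2,Q_2Q_4)$ modulo the kernel line $\QQ g_6$ gives an overdetermined system in $c$ and $A_pQ_6$. I would show it is inconsistent by comparing $E_2^3$-coefficients. The contradiction is cleanest after applying $d$, using \eqref{eq:zagier-lower} and \eqref{eq:dT-twist}.

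\textbf{Part (ii).} Exactness in weight $4$ with $K^\Lambda_4=0$ forces
\[
A_{p,4}(e_4)=(1+p^3)e_4.
\]
The hypothesis then gives $A_{p,6}(Q_2e_4)=(1+p)(1+p^3)Q_2e_4$. Taking brackets and using exactness in weight $6$ yields
\[
T_{p,6}\qbr{Q_2e_4}=(1+p)(1+p^3)\qbr{Q_2e_4}.
\]
The key step is computing $\qbr{Q_2e_4}$ in the basis $E_6,E_2E_4,E_2^3$; expect something like $-\tfrac{1}{5760}E_2E_4+\alpha E_6$ via the bracket of $Q_2f$ involving $D\qbr{f}$. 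Then decompose it along the Hecke-eigenspaces of $\QM_6$. These are $E_6$ with eigenvalue $1+p^5$, $DE_4$ with eigenvalue $p(1+p^3)$, and $D^2E_2$ with eigenvalue $p^2(1+p)$. A nonzero $E_6$ or $DE_4$ component already contradicts eigenvalue $(1+p)(1+p^3)$, since $1+p^5\neq(1+p)(1+p^3)$ and $p\neq 1+p$.

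\textbf{Main obstacle.} The hard part is getting the exact normalization of $T_{p,w}$ on quasimodular forms and the exact bracket values right. A sign or scaling error would make it look as though a coincidence of eigenvalues occurs. I would verify the weight-$6$ bracket against the known relation $\qbr{g_6}=0$ from \eqref{eq:g6-def}, and check $T_p$ on $E_2^2$ independently for $p=2$ using $q$-expansions.
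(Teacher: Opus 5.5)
Your strategy is the paper's, rephrased in eigenspace language: compare how one nonzero component of the bracket scales under $T_p$ with how it scales under the assumed lift. As written, however, one step of (i) is false and the decisive computation in (ii) is not carried out.

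\textbf{Part (i).} The assertion that both eigenvalues $1+p^3$ and $p(1+p)$ of $T_{p,4}$ on $\QM_4$ differ from $(1+p)^2$ for every $p$ is wrong. At $p=2$ one has $(1+p)^2=9=1+p^3$, so $E_4$ is a $T_{2,4}$-eigenvector with exactly the eigenvalue that multiplicativity forces, and the argument ``$(1+p)^2$ is not an eigenvalue'' breaks. The contradiction has to come from the component you mention but never use. Write $D=q\frac{d}{dq}$ as you do, and note that the paper's $D$ is a different operator. The identity $\qbr{Q_2f}=D\qbr{f}-\frac{E_2}{24}\qbr{f}$, obtained by logarithmic differentiation as in \eqref{eq:Q2-bracket}, gives $\qbr{Q_2^2}=\frac{1}{576}(E_4-12\,DE_2)$. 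Its $DE_2$-component is nonzero; equivalently its top $E_2^2$-coefficient is nonzero, since $\mu(Q_2^2)=\frac14$. This component is scaled by $p(1+p)$ under $T_{p,4}$ but by $(1+p)^2$ under the lift, and these never agree. That is exactly the paper's proof via \cref{lem:top-hecke}. The weight-$6$ fallback is not needed.

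\textbf{Part (ii).} You leave $\qbr{Q_2e_4}$ as an expectation, and the guessed shape is off. The same identity gives $\qbr{Q_2e_4}=\frac{7E_2E_4-8E_6}{5760}=\frac{21\,DE_4-E_6}{5760}$. So $-\frac1{5760}$ is the constant term, i.e.\ the $E_6$-component, not the $E_2E_4$-coefficient. The argument closes once you see this. Since $DE_4$ and $D^2E_2$ have no constant term, the $E_6$-component equals $(Q_2e_4)(\varnothing)=-\frac1{5760}\neq0$, and it is scaled by $1+p^5\neq(1+p)(1+p^3)$. The paper does precisely this without any eigen-decomposition, using only that $T_{p,6}$ multiplies constant terms by $1+p^5$. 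Alternatively, check that $(1+p)(1+p^3)$ also differs from $p^2(1+p)$. Then it is not an eigenvalue of $T_{p,6}$ on $\QM_6$ at all, and $\qbr{Q_2e_4}\neq0$ alone suffices.
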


Part (i) is not surprising, since Hecke operators are not ring
homomorphisms; its proof is a short comparison of eigenvalues on
\(\qbr{Q_2}\) and \(\qbr{Q_2^2}\). Part (ii) is the stronger
statement. Even semilinearity with respect to a single multiplication, tested at
a single element on the Eisenstein line, is incompatible with
exactness, and part (ii) implies part (i), since a multiplicative lift
is in particular strictly \(Q_2\)-semilinear on \(e_4\). Every exact
lift therefore carries
nonzero correction terms of the type classified by
\cref{thm:main-structure}, and the defect
\(A_{p,6}(Q_2e_4)-(1+p)Q_2A_{p,4}(e_4)\) is a concrete nonzero element
of \(L_6\) attached to each one.

Despite the obstructions, exact lifts exist whenever \(I_{2k}\) is
Hecke-stable, or if the q-bracket is surjective, and simple ones can be written down. Fix a section
\(s_{2k}\) as above. For \(2k\geq 4\), define the
\emph{scalar-kernel lift} (see \cref{def:transported-scalar})
\begin{equation}
\label{eq:sc-def}
        A^{\mathrm{sc}}_{n,2k}\bigl(s_{2k}(\phi)+u\bigr)
        =
        s_{2k}(T_{n,2k}\phi)+n^{k-2}\sigma_3(n)u,
        \qquad
        \phi\in I_{2k},\ u\in K_{2k}.
\end{equation}
The scalars \(\lambda_{n,2k}=n^{k-2}\sigma_3(n)\) are chosen to make
this work. By the classical identity
\(\sigma_3(m)\sigma_3(n)=\sum_{d\mid(m,n)}d^3\sigma_3(mn/d^2)\) they
satisfy the relations \eqref{eq:hecke-comp} in weight \(2k\), and they
obey \(\lambda_{n,2k}=n\lambda_{n,2k-2}\), matching the twist in
\eqref{eq:dT-twist}.

\begin{corollary}[Scalar-kernel lifts and weights at most \(16\); see
\cref{thm:scalar-lift,cor:scalar-strict,prop:kerneldims}]
\label{cor:main-16}
Let \(2k\geq 4\) and suppose \(I_{2k}\) is stable under the level-one
Hecke action. Then \(\{A^{\mathrm{sc}}_{n,2k}\}_{n\geq 1}\) is a full
exact Hecke lift, and if the construction is made in both weights
\(2k\) and \(2k-2\), with \(k\geq 3\), then
\[
        BA^{\mathrm{sc}}_{n,2k}u=nA^{\mathrm{sc}}_{n,2k-2}Bu
        \qquad (u\in K_{2k}).
\]
Moreover, exact rational computation gives
\[
        \qbr{L_{2k}}=\QM_{2k}
        \qquad (2k\leq 16),
\]
so in every even weight at most \(16\) the construction produces exact
lifts of the full quasimodular Hecke action, kernel-strict between
adjacent weights.
\end{corollary}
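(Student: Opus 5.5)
The corollary has three parts: the Hecke lift property, compatibility with lowering, and the surjectivity computation. I would treat them in that order.

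\emph{Full exact Hecke lift.} Relative to the splitting \(R_{2k}=s_{2k}(I_{2k})\oplus K_{2k}\), the operator \(A^{\mathrm{sc}}_{n,2k}\) is block triangular with diagonal blocks \(T^{\mathrm{quot}}_{n,2k}\) and \(N_{n,2k}=\lambda_{n,2k}\Id\), and with \(C_{n,2k}=0\). These operators are well defined because \(I_{2k}\) is Hecke-stable. Exactness is then immediate:
\[
q_{2k}A^{\mathrm{sc}}_{n,2k}\bigl(s_{2k}\phi+u\bigr)=T_{n,2k}\phi=T_{n,2k}\,q_{2k}\bigl(s_{2k}\phi+u\bigr).
\]
By \cref{thm:main-structure}, the Hecke relations reduce to two conditions. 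The first is \eqref{eq:hecke-comp} for the scalar \(N_{n,2k}\). The second is the cocycle identity \eqref{eq:cocycle}, which holds trivially when \(C=0\).

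For the first condition, write \(\lambda_{n,2k}=n^{k-2}\sigma_3(n)\). Then
\[
\lambda_{m,2k}\lambda_{n,2k}=(mn)^{k-2}\sum_{d\mid(m,n)}d^3\sigma_3(mn/d^2).
\]
Writing \((mn)^{k-2}=d^{2k-4}(mn/d^2)^{k-2}\) gives
\[
\lambda_{m,2k}\lambda_{n,2k}=\sum_{d\mid(m,n)}d^{2k-1}\lambda_{mn/d^2,2k},
\]
which is the required relation. The classical identity \(\sigma_3(m)\sigma_3(n)=\sum_{d\mid(m,n)}d^3\sigma_3(mn/d^2)\) used here is the Hecke multiplicativity of the \(E_4\)-eigenvalues, i.e.\ \(T_{n,4}E_4=\sigma_3(n)E_4\) combined with \eqref{eq:hecke-comp} in weight \(4\).

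\emph{Compatibility with lowering.} Take \(u\in K_{2k}\). By \eqref{eq:zagier-lower}, \(\qbr{Bu}=d\qbr{u}=0\), so \(Bu\in K_{2k-2}\). This is the only place where \(k\geq 3\) is needed: weight \(2k-2\geq 4\) is required for the construction to exist in that weight. The scalar-kernel lift in weight \(2k-2\) acts on \(Bu\) by \(\lambda_{n,2k-2}\). Hence
\[
nA^{\mathrm{sc}}_{n,2k-2}Bu=n\lambda_{n,2k-2}Bu=\lambda_{n,2k}Bu=B A^{\mathrm{sc}}_{n,2k}u,
\]
using \(\lambda_{n,2k}=n\lambda_{n,2k-2}\). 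Restricting to \(K^\Lambda_{2k}\) gives kernel-strictness. The only remaining point is that the lifts preserve the genuine kernels, which holds because the action on \(K^\Lambda_{2k}\subseteq K_{2k}\) is scalar.

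\emph{Surjectivity through weight 16.} This is the main obstacle, since it is purely computational. I would handle each even weight \(2k\leq 16\) separately. First, express each monomial of \(L_{2k}\) through Zagier's formula for \(\qbr{Q_{\lambda}}\), or equivalently compute its \(q\)-expansion to enough coefficients. Next, record each bracket as a vector in the basis \(E_2^aE_4^bE_6^c\) of \(\QM_{2k}\). A \(q\)-expansion identifies such a form once the number of coefficients exceeds \(\dim\QM_{2k}\), by a Sturm-type bound for quasimodular forms. Finally, check by exact rational linear algebra that the resulting matrix has rank \(\dim\QM_{2k}\).

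Since \(Q_1\) monomials have zero bracket, \(I_{2k}=\qbr{L_{2k}}\). Surjectivity therefore gives \(I_{2k}=\QM_{2k}\), which is trivially Hecke-stable, and the first part of the corollary then applies in every even weight from \(4\) to \(16\). The expected kernel dimensions \(0,1,3,\dots\) from \cref{sec:computations} serve as a consistency check via rank–nullity against \(\dim L_{2k}=p(2k)-p(2k-1)\).
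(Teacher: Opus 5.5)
Your proposal is correct and follows the paper's route: the block form with $C_{n,2k}=0$ plus the scalar identity for $\lambda_{n,2k}$, then $Bu\in K_{2k-2}$ together with $\lambda_{n,2k}=n\lambda_{n,2k-2}$, and finally an exact rank computation combined with the Bloch--Okounkov inclusion $\qbr{L_{2k}}\subseteq\QM_{2k}$. You can drop the Sturm-type bound in the last step: it is not needed, and as you state it for quasimodular forms of full depth it would itself need justification. The rank of any truncated coefficient matrix of the brackets, such as the paper's matrix of coefficients $q^0$ through $q^{28}$, is automatically a lower bound for $\dim\qbr{L_{2k}}$, so reaching $\dim\QM_{2k}$ already forces equality.
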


In particular, the hypotheses of the transport results above are not
vacuous, since the scalar-kernel lifts satisfy them. Note also that this is
consistent with the \(p\)-adic constraint above, since the kernel
scalar
\(\lambda_{p,2k}=p^{k-2}(1+p^3)\) is divisible by \(p\) once
\(k\geq 3\). The computation in \cref{cor:main-16} is finite-weight evidence
only; whether \(\qbr{L_{2k}}=\QM_{2k}\) holds in every weight is open,
and is recorded as \Cref{q:surjectivity}. \Cref{app:harmonic} reduces
this surjectivity question to a harmonic statement, conditional on a
depth-triangularity property of multiplication by \(Q_2\).

The paper is organized as follows. \Cref{sec:prelim} fixes the
normalizations for the shifted generators, the Hecke operators, and the
almost-holomorphic model. \Cref{sec:cocycles} proves \cref{thm:main-structure},
\cref{sec:obstructions} proves \cref{thm:main-obstructions}, and
\cref{sec:Binj} proves \cref{thm:main-injectivity} together with its
transport consequences. \Cref{sec:scalar}
constructs the scalar-kernel lifts, \cref{sec:computations} contains
the rank computations through weight \(16\), and \cref{sec:discussion}
collects open questions. \Cref{app:computational} records the
exact-rational rank certificates and the low-weight kernel vectors,
including \eqref{eq:g6-def} and \eqref{eq:Bg6};
\cref{app:harmonic} gives the conditional harmonic reduction.
\section{Preliminaries and normalizations}
\label{sec:prelim}

We retain the notation of \cref{sec:intro}; in particular
\(R=\QQ[Q_1,Q_2,Q_3,\ldots]\), \(\Lam=\QQ[Q_2,Q_3,Q_4,\ldots]\), and
\(L_m=\Lam\cap R_m\). This section fixes the normalizations used in the
proofs.

We use Zagier's normalization of the Bloch--Okounkov shifted generators
\cite{ZagierBO}.

\begin{definition}[Shifted generators]
\label{def:shifted-generators}
For a partition \(\lambda=(\lambda_1,\lambda_2,\ldots)\) and an integer
\(r\geq 0\), set
\[
        P_r(\lambda)
        =
        \sum_{i\geq 1}
        \left[
        \left(\lambda_i-i+\frac12\right)^r
        -
        \left(-i+\frac12\right)^r
        \right].
\]
The sum is finite, since \(\lambda_i=0\) for all sufficiently large
\(i\). The \emph{shifted generators} are
\[
        Q_0=1,\qquad
        Q_j(\lambda)=\frac{P_{j-1}(\lambda)}{(j-1)!}+\beta_j
        \qquad (j>0),
\]
where the constants \(\beta_j\) are determined by
\[
        \frac{z/2}{\sinh(z/2)}
        =
        \sum_{j\geq 0}\beta_jz^j.
\]
\end{definition}

Since \(P_0=0\) and \(P_1(\lambda)=|\lambda|\), while \(\beta_1=0\) and
\(\beta_2=-\frac1{24}\) because the left-hand side is an even function
of \(z\), this normalization produces the two facts already used in
\cref{sec:intro}:
\[
        Q_1=0\quad\text{on partitions},
        \qquad
        Q_2(\lambda)=|\lambda|-\frac1{24}.
\]

For \(f\in R_m\), we write \(q_m(f)=\qbr{f}\). Since \(Q_1\) vanishes on
partitions, \(q_m(R_m)=q_m(L_m)\), and the Bloch--Okounkov theorem in
this normalization gives
\[
        q_m(R_m)\subseteq \QM_m
\]
for every \(m\) \cite{BlochOkounkov,ZagierBO}. The odd-weight pieces of
\(R\) lie in the \(q\)-bracket kernel, since \(\QM\) has no odd-weight
part.
\begin{definition}[Hecke operators]
\label{def:hecke}
The level-one \emph{Hecke operator} of index \(n\) in weight \(w\) acts
on quasimodular forms by
\[
        T_{n,w}F(\tau)
        =
        \frac1n
        \sum_{cc'=n}
        \sum_{0\leq b<c}
        c'^{\,w}
        F\left(\frac{c'\tau+b}{c}\right).
\]
\end{definition}
For fourier-series representations of modular forms, the Hecke operator of prime index \(p\) in
weight \(w\) can also be written as
\begin{equation}
  T_{p,w}\Bigl(\sum_{n\ge0}a_nq^n\Bigr)
  =\sum_{n\ge0}\bigl(a_{np}+p^{w-1}a_{n/p}\bigr)q^n,
\end{equation}
where \(a_{n/p}=0\) unless \(p\mid n\), and the same formula defines
the action on quasimodular forms \cite{Movasati}, although we will not use this form.

With this normalization, the operators satisfy the composition law
\eqref{eq:hecke-comp}, and for Eisenstein series
\[
        T_{n,w}E_w=\sigma_{w-1}(n)E_w
        \qquad (w\geq 4);
\]
see \cite{Zagier123,Movasati} for the Hecke action on quasimodular forms.

Put
\[
        X=(4\pi\operatorname{Im}\tau)^{-1}.
\]
Every \(F\in \QM_w\) has a unique almost-holomorphic lift \(F^*\), a
polynomial in \(X\) with holomorphic coefficients which transforms as a
modular form of weight \(w\); the degree of \(F^*\) in \(X\) is the
\(E_2\)-depth of \(F\) \cite{KanekoZagier,Zagier123}. The coefficient
of the highest power of \(X\) is a fixed nonzero scalar multiple of the
coefficient of the highest power of \(E_2\) in \(F\). Indeed, since
\(E_2^*=E_2-12X\) with this choice of \(X\), and since modular forms have
$E_2$-depth zero, the \(X^k\)-coefficient of \(F^*\) for \(F\in\QM_{2k}\) is
the constant \((-12)^k\) times the \(E_2^k\)-coefficient of \(F\).

Under the substitution \(\tau\mapsto(c'\tau+b)/c\), one has
\begin{equation}
\label{eq:X-transform}
        \operatorname{Im}\left(\frac{c'\tau+b}{c}\right)
        =
        \frac{c'}{c}\operatorname{Im}\tau,
        \qquad
        X\longmapsto \frac{c}{c'}X.
\end{equation}
The transformation rule \eqref{eq:X-transform} is the only property of
the model used in the proof of \cref{lem:top-hecke}; the expansion of
\(F^*\) in powers of \(X\) is needed only once, in the proof of
\cref{lem:twisted-lowering}.

Recall from \cref{sec:intro} the lowering derivation \(d\), determined
by \(d(E_2)=12\) and \(d(E_4)=d(E_6)=0\). In the almost-holomorphic
model, \(d\) is, up to a fixed nonzero normalization, differentiation
in the \(X\)-direction.
Define the algebra homomorphism
\[
        \mu:R\longrightarrow \QQ,
        \qquad
        \mu(Q_j)=\frac{1-j}{j!}.
\]
Zagier's top-depth formula \cite{ZagierBO} says that for
\(f\in R_{2k}\),
\begin{equation}
\label{eq:top-depth}
        \qbr{f}
        =
        -\frac{(2k-3)!!}{(-12)^k}\mu(f)E_2^k
        +
        \text{lower powers of }E_2.
\end{equation}
For \(F\in \QM_{2k}\), let \(\Top(F)\) denote the coefficient of
\(E_2^k\).

\begin{lemma}[Top coefficient under Hecke]
\label{lem:top-hecke}
For \(F\in \QM_{2k}\),
\[
        \Top(T_{n,2k}F)
        =
        n^{k-1}\sigma_1(n)\Top(F).
\]
\end{lemma}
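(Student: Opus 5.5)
We propose to compute the top coefficient through the almost-holomorphic lift, using only the transformation rule \eqref{eq:X-transform}. Let \(F\in\QM_{2k}\) with almost-holomorphic lift
\[
        F^*=\sum_{j=0}^{k}f_jX^j,
        \qquad f_k=(-12)^k\Top(F),
\]
where each \(f_j\) is holomorphic. We read \(f_k\) as a constant because \(f_k\) has weight \(2k-2k=0\); equivalently, it is the scalar multiple of \(\Top(F)\) recorded in \cref{sec:prelim}.

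Apply the averaging formula of \cref{def:hecke} to \(F^*\) rather than to \(F\). The Hecke operator commutes with taking almost-holomorphic lifts: the same averaging applied to \(F^*\) yields a function transforming with weight \(2k\) whose constant term in \(X\) is \(T_{n,2k}F\). By uniqueness of the lift, this function is \((T_{n,2k}F)^*\). This compatibility is the step we expect to need the most care. We would justify it by noting that the averaging over the coset representatives of the matrices of determinant \(n\) preserves the weight-\(2k\) transformation law. We would also check that it maps polynomials in \(X\) with holomorphic coefficients to polynomials of the same type, and that it sends the \(X^0\)-part to the \(X^0\)-part, so that holomorphic projection commutes with the averaging. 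This is the standard Kaneko--Zagier compatibility, and we would cite \cite{KanekoZagier,Zagier123} for it.

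Granting this, isolate the \(X^k\)-coefficient. By \eqref{eq:X-transform}, each term \(F^*\bigl((c'\tau+b)/c\bigr)\) contributes
\[
        f_k\Bigl(\frac{c}{c'}\Bigr)^{k}X^k
\]
at top degree, plus lower powers of \(X\). Hence the \(X^k\)-coefficient of \((T_{n,2k}F)^*\) equals
\[
        f_k\cdot\frac1n\sum_{cc'=n}c\cdot c'^{\,2k}\Bigl(\frac{c}{c'}\Bigr)^k
        =
        \frac{f_k}{n}\sum_{cc'=n}c^{k+1}c'^{\,k}
        =
        f_k\,n^{k-1}\sum_{c\mid n}c
        =
        n^{k-1}\sigma_1(n)f_k.
\]
Here the factor \(c\) counts the choices of \(b\), and we used \(c^{k+1}c'^{\,k}=n^kc\).

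Dividing by \((-12)^k\), which is the same constant on both sides since \(T_{n,2k}F\) again has weight \(2k\), gives \(\Top(T_{n,2k}F)=n^{k-1}\sigma_1(n)\Top(F)\).

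As a sanity check, take \(k=1\). For \(F=E_2\) the formula gives the eigenvalue \(\sigma_1(n)\), which matches the known action \(T_{n,2}E_2=\sigma_1(n)E_2\) in weight \(2\). The step we expect to be the main obstacle is the compatibility of \(T_{n,2k}\) with almost-holomorphic lifts. The remaining computation is the divisor sum above.
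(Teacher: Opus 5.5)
Your proposal is correct and follows the paper's proof essentially step for step. You identify the Hecke average of \(F^*\) with \((T_{n,2k}F)^*\) by uniqueness of the almost-holomorphic lift, track the constant top coefficient \((-12)^k\Top(F)\) through the rescaling \(X\mapsto (c/c')X\), and evaluate the same divisor sum \(\frac1n\sum_{cc'=n}c^{k+1}c'^{\,k}=n^{k-1}\sigma_1(n)\). The only minor difference is terminology: the step you call ``holomorphic projection'' is simply taking the \(X^0\)-coefficient, which is also what the paper uses.
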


\begin{proof}
Pass to the almost-holomorphic lift \(F^*\) of F. The Hecke average of \(F^*\) and
the lift of \(T_{n,2k}F\) are almost-holomorphic modular forms of
weight \(2k\) that agree at \(X=0\), hence are equal. The \(X^k\)-coefficient of \(F^*\) is the constant
\((-12)^k\Top(F)\), so precomposition with
\(\tau\mapsto(c'\tau+b)/c\) leaves it unchanged, and the substitution
acts on the top term only through the rescaling
\(X\mapsto(c/c')X\) of \eqref{eq:X-transform}. In the summand indexed
by \(cc'=n\), the top \(X^k\)-coefficient is therefore multiplied by
\[
        \frac1n c'^{\,2k}\left(\frac{c}{c'}\right)^k.
\]
There are \(c\) choices of \(b\), so the total multiplier is
\[
        \frac1n\sum_{cc'=n}c^{k+1}c'^{\,k}
        =
        \frac1n\sum_{c\mid n}c^{k+1}\left(\frac nc\right)^k
        =
        n^{k-1}\sigma_1(n).
\]
\end{proof}

\begin{corollary}[Top-term eigenvalue law]
\label{cor:top-eigenvalue}
If \(A_{n,2k}\) is an exact lift, then
\[
        \mu(A_{n,2k}f)
        =
        n^{k-1}\sigma_1(n)\mu(f)
        \qquad (f\in R_{2k}).
\]
\end{corollary}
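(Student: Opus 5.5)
The corollary follows by combining the top-depth formula \eqref{eq:top-depth} with \cref{lem:top-hecke}. Everything reduces to comparing the \(E_2^k\)-coefficients of the two sides of the exactness relation \eqref{eq:exact-lift}.

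Fix \(f\in R_{2k}\) and set \(F=\qbr{f}=q_{2k}(f)\in\QM_{2k}\). By \eqref{eq:top-depth} applied to \(f\), and again to \(A_{n,2k}f\), which lies in \(R_{2k}\) because the lift preserves weight, we get
\[
        \Top(\qbr{g})=c_k\,\mu(g),\qquad c_k=-\frac{(2k-3)!!}{(-12)^k},
\]
for \(g=f\) and \(g=A_{n,2k}f\). The constant \(c_k\) is nonzero; in weight \(2\) (\(k=1\)) we read \((-1)!!=1\), which is consistent with \eqref{eq:Q2-bracket}.

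Exactness gives \(\qbr{A_{n,2k}f}=T_{n,2k}F\), so
\[
        c_k\,\mu(A_{n,2k}f)=\Top(T_{n,2k}F).
\]
\Cref{lem:top-hecke} then gives
\[
        \Top(T_{n,2k}F)=n^{k-1}\sigma_1(n)\Top(F)=n^{k-1}\sigma_1(n)\,c_k\,\mu(f).
\]
Dividing by \(c_k\neq 0\) yields \(\mu(A_{n,2k}f)=n^{k-1}\sigma_1(n)\mu(f)\).

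There is no serious obstacle. The only points to check are that \(\Top\) is well defined and linear on \(\QM_{2k}\), which holds because \(E_2,E_4,E_6\) are algebraically independent, and that \eqref{eq:top-depth} holds for every \(f\in R_{2k}\), including monomials containing \(Q_1\). In that case \(\qbr{f}=0\), so \eqref{eq:top-depth} forces \(\mu(f)=0\). Since \(\mu(Q_1)=0\) by the definition of \(\mu\), this is consistent, and the argument needs no restriction to \(L_{2k}\).
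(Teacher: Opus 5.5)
Your proposal is correct and is essentially the paper's own proof: \eqref{eq:top-depth} makes $\Top(\qbr{g})$ the fixed nonzero multiple $c_k\,\mu(g)$ for every $g\in R_{2k}$, and exactness combined with \cref{lem:top-hecke} then yields the eigenvalue law after dividing by $c_k$. Your additional checks are correct but go beyond what the paper's two-line argument spells out: that $\Top$ is well defined, that the normalization is consistent in weight $2$, and that monomials containing $Q_1$ behave consistently since $\mu(Q_1)=0$.
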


\begin{proof}
By \eqref{eq:top-depth}, \(\Top(\qbr{f})\) is a fixed nonzero multiple
of \(\mu(f)\). Apply \cref{lem:top-hecke} and exactness
\eqref{eq:exact-lift}.
\end{proof}

Recall from \cref{sec:intro} the derivation \(\partial\), the
second-order operator \(D\), and Zagier's lowering operator
\(B=\frac12(D-\partial^2)\) of \eqref{eq:B-def}, together with Zagier's
identity \eqref{eq:zagier-lower}. In particular, if \(\qbr{f}\) is
modular, then \(\qbr{Bf}=0\); the injectivity theorem for \(B\) in
\cref{sec:Binj} is therefore a statement about the source space
\(L_m\), not about the \(q\)-bracket image.

\begin{lemma}[Twisted lowering]
\label{lem:twisted-lowering}
For \(F\in \QM_{2k}\),
\[
        d(T_{n,2k}F)=nT_{n,2k-2}(dF).
\]
Consequently, if \(A_{n,2k}\) and \(A_{n,2k-2}\) are exact lifts, then
\[
        BA_{n,2k}f-nA_{n,2k-2}Bf\in K_{2k-2}
        \qquad (f\in R_{2k}).
\]
\end{lemma}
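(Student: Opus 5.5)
The plan is to work in the almost-holomorphic model. There \(d\) is, up to a fixed nonzero constant \(c_0\), the derivative \(\partial/\partial X\) acting on the lift \(F^*=\sum_j f_j X^j\). We then track how the Hecke average interacts with differentiation in \(X\). Concretely, we first pin down the normalization: from \(E_2^*=E_2-12X\) and \(d(E_2)=12\) one gets \((dF)^*=-\partial_X F^*\). To see this, check it on generators, since both sides are derivations annihilating \(E_4^*=E_4\) and \(E_6^*=E_6\). Then \(\partial_X(E_2-12X)=-12\), and \(d\) is compatible with \(F\mapsto F^*\) because the lift map is a ring isomorphism onto almost-holomorphic forms. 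The only input needed is that the map \(F\mapsto F^*\), i.e. \(E_2\mapsto E_2-12X\), is a ring homomorphism.

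Next we compute \((T_{n,2k}F)^*\). As in the proof of \cref{lem:top-hecke}, it equals the Hecke average of \(F^*\) applied termwise, with each holomorphic coefficient \(f_j\) composed with \(\tau\mapsto(c'\tau+b)/c\) and \(X\) replaced by \((c/c')X\) via \eqref{eq:X-transform}. Thus
\[
(T_{n,2k}F)^*=\frac1n\sum_{cc'=n}\sum_{0\le b<c}c'^{\,2k}\sum_j f_j\!\left(\tfrac{c'\tau+b}{c}\right)\left(\tfrac{c}{c'}\right)^{j}X^j .
\]
Differentiating in \(X\) gives, in the summand indexed by \(c,c'\), the term \(j f_j(\cdot)(c/c')^{j}X^{j-1}\). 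We rewrite this as
\[
\frac{c}{c'}\cdot\bigl(jf_j(\cdot)\bigr)\left(\tfrac{c}{c'}\right)^{j-1}X^{j-1}.
\]
The prefactor becomes \(c'^{\,2k}\cdot c/c' = c\,c'^{\,2k-2}\cdot c'^{\,2}/c'\cdot c/c'\). The cleaner bookkeeping is \(c'^{\,2k}(c/c')=c'^{\,2k-2}\cdot cc' = n\,c'^{\,2k-2}\). Hence
\[
\partial_X (T_{n,2k}F)^*=n\cdot\frac1n\sum_{cc'=n}\sum_b c'^{\,2k-2}\bigl(\partial_XF^*\bigr)\!\left(\tfrac{c'\tau+b}{c}\right)\Big|_{X\mapsto (c/c')X},
\]
which is \(n\) times the weight-\((2k-2)\) Hecke average of \(\partial_XF^*=-(dF)^*\). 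Applying the same identification of the Hecke average with the lift of the Hecke image, now in weight \(2k-2\), yields \((d T_{n,2k}F)^*=n\,(T_{n,2k-2}dF)^*\). Finally we set \(X=0\), i.e. use injectivity of \(F\mapsto F^*\).

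The main obstacle is the justification, already used in \cref{lem:top-hecke}, that the Hecke average of \(F^*\) equals \((T_{n,w}F)^*\). This holds because the average of an almost-holomorphic modular form of weight \(w\) is again one, and its constant term in \(X\) is \(T_{n,w}F\). We need it in both weights \(2k\) and \(2k-2\), and I will cite it in the same way.

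For the consequence, we compute \(q_{2k-2}(BA_{n,2k}f)\) using \eqref{eq:zagier-lower} and exactness. This gives \(d\,q_{2k}(A_{n,2k}f)=d\,T_{n,2k}q_{2k}(f)=nT_{n,2k-2}\,d\,q_{2k}(f)\). Likewise \(q_{2k-2}(nA_{n,2k-2}Bf)=nT_{n,2k-2}\,q_{2k-2}(Bf)=nT_{n,2k-2}\,d\,q_{2k}(f)\). The two agree, so the difference lies in \(K_{2k-2}\).
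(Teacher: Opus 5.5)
Your proposal is correct and follows essentially the same route as the paper. You pass to the almost-holomorphic lift, note that differentiating in \(X\) after the substitution \(X\mapsto (c/c')X\) produces a factor \(c/c'\), use \(c'^{\,2k}(c/c')=n\,c'^{\,2k-2}\) to recognize \(n\) times the weight-\((2k-2)\) Hecke summand, and then derive the kernel statement from Zagier's identity and exactness in both weights. You are more explicit than the paper about the normalization \((dF)^*=-\partial_XF^*\) and about identifying the Hecke average of \(F^*\) with \((T_{n,w}F)^*\) in both weights. The one blemish is the garbled intermediate expression for the prefactor, which your following ``cleaner bookkeeping'' line correctly replaces.
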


\begin{proof}
Write
\[
        F^*(\tau)=\sum_{j=0}^kF_j(\tau)X^j
\]
for the almost-holomorphic lift of \(F\), so that \(d\) is, up to the
fixed normalization above, differentiation with respect to \(X\). In a
Hecke summand indexed by \(cc'=n\), the substitution
\(\tau\mapsto(c'\tau+b)/c\) sends \(X\) to \((c/c')X\) by
\eqref{eq:X-transform}. Applying \(d\) after this substitution
therefore introduces the factor \(c/c'\). The coefficient of the
summand in weight \(2k\) is \(n^{-1}c'^{\,2k}\), so after applying
\(d\) the coefficient becomes
\[
        \frac1n c'^{\,2k}\frac{c}{c'}
        =
        \frac{cc'}n c'^{\,2k-2}
        =
        c'^{\,2k-2}.
\]
The corresponding coefficient in the weight \(2k-2\) Hecke operator is
\(n^{-1}c'^{\,2k-2}\). Hence the differentiated weight-\(2k\) summand
is \(n\) times the corresponding weight-\((2k-2)\) summand applied to
\(dF\). Summing over \(c,c'\), and \(b\), gives
\[
        d(T_{n,2k}F)=nT_{n,2k-2}(dF).
\]

Now let \(f\in R_{2k}\). Using Zagier's identity
\eqref{eq:zagier-lower}, exactness in weight \(2k\), the identity just
proved, and exactness in weight \(2k-2\), we obtain
\[
\begin{aligned}
        \qbr{BA_{n,2k}f}
        &=
        d\qbr{A_{n,2k}f}                         \\
        &=
        dT_{n,2k}\qbr{f}                           \\
        &=
        nT_{n,2k-2}d\qbr{f}                         \\
        &=
        nT_{n,2k-2}\qbr{Bf}                         \\
        &=
        n\qbr{A_{n,2k-2}Bf}.
\end{aligned}
\]
Therefore the \(q\)-bracket of
\[
        BA_{n,2k}f-nA_{n,2k-2}Bf
\]
vanishes, which is exactly the assertion that this difference lies in
\(K_{2k-2}\).
\end{proof}
\section{Kernel-cocycle structure of exact lifts}
\label{sec:cocycles}

Exact lifts have a useful fixed-weight normal form before any naturality condition is imposed.  This section records that normal form and the cocycle equation that the Hecke relations impose on it.  These facts do not solve the lifting problem, but they locate precisely where the correction terms of a lift must live.

Throughout the section, fix an even weight $2k$, assume $I_{2k}$ is stable under $T_{n,2k}$, and choose a linear section
\[
  s_{2k}:I_{2k}\to R_{2k},
  \qquad
  q_{2k}s_{2k}=\Id_{I_{2k}},
\]
so that $R_{2k}=s_{2k}(I_{2k})\oplus K_{2k}$.  As in \cref{def:exact-lift}, $T^{\mathrm{quot}}_{n,2k}=T_{n,2k}|_{I_{2k}}$ denotes the induced Hecke operator on the image.

\begin{theorem}[Fixed-weight classification]
\label{thm:block-classification}
Let $A_{n,2k}:R_{2k}\to R_{2k}$ be an exact lift of $T_{n,2k}$.  Relative to the decomposition
\[
  R_{2k}=s_{2k}(I_{2k})\oplus K_{2k},
\]
there are unique maps
\[
  C_{n,2k}\in\Hom(I_{2k},K_{2k}),
  \qquad
  N_{n,2k}\in\End(K_{2k})
\]
such that
\[
  A_{n,2k}=\begin{pmatrix}
  T^{\mathrm{quot}}_{n,2k}&0\\
  C_{n,2k}&N_{n,2k}
  \end{pmatrix}.
\]
Conversely, every operator of this form is an exact lift.
\end{theorem}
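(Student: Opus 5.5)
The plan is to write \(A_{n,2k}\) in block form relative to the splitting \(R_{2k}=s_{2k}(I_{2k})\oplus K_{2k}\), with the section \(s_{2k}\) used to identify the first summand with \(I_{2k}\). Concretely, every element of \(R_{2k}\) can be written uniquely as \(s_{2k}(\phi)+u\), where \(\phi=q_{2k}(f)\) and \(u=f-s_{2k}q_{2k}(f)\in K_{2k}\). The projection onto \(K_{2k}\) is therefore \(\pi=\Id-s_{2k}q_{2k}\). Any linear map \(A:R_{2k}\to R_{2k}\) then has a unique \(2\times 2\) block decomposition with entries
\[
  q_{2k}As_{2k},\qquad q_{2k}A|_{K_{2k}},\qquad \pi As_{2k},\qquad \pi A|_{K_{2k}}.
\]
Uniqueness of the blocks is therefore automatic. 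The content of the theorem is the identification of the two top blocks.

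\textbf{Forward direction.} I would apply exactness \eqref{eq:exact-lift} to each top block. For the top-left block, \(q_{2k}A_{n,2k}s_{2k}=T_{n,2k}q_{2k}s_{2k}=T_{n,2k}|_{I_{2k}}=T^{\mathrm{quot}}_{n,2k}\). This uses the standing hypothesis that \(I_{2k}\) is \(T_{n,2k}\)-stable, so the block really is an endomorphism of \(I_{2k}\). For the top-right block, if \(u\in K_{2k}\) then \(q_{2k}A_{n,2k}u=T_{n,2k}q_{2k}u=0\). The remaining blocks are simply named: \(C_{n,2k}=\pi A_{n,2k}s_{2k}\) and \(N_{n,2k}=\pi A_{n,2k}|_{K_{2k}}\).

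\textbf{Converse.} Given arbitrary \(C\) and \(N\), define
\[
  A\bigl(s_{2k}(\phi)+u\bigr)=s_{2k}\bigl(T_{n,2k}\phi\bigr)+C\phi+Nu.
\]
Applying \(q_{2k}\) kills \(C\phi+Nu\), since these lie in \(K_{2k}\), and gives \(q_{2k}A(f)=T_{n,2k}\phi=T_{n,2k}q_{2k}(f)\), which is exactly \eqref{eq:exact-lift}.

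There is no genuine obstacle in this argument. The only points needing care are that the \(I_{2k}\)-coordinate of an element is given by \(q_{2k}\) itself, since \(q_{2k}s_{2k}=\Id\), and that Hecke-stability of \(I_{2k}\) is what makes \(T^{\mathrm{quot}}_{n,2k}\) well defined.
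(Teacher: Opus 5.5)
Your proposal is correct and is essentially the paper's argument. Exactness forces $q_{2k}A_{n,2k}u=0$ on $K_{2k}$ and $q_{2k}A_{n,2k}s_{2k}=T^{\mathrm{quot}}_{n,2k}$. Your blocks $C_{n,2k}=\pi A_{n,2k}s_{2k}=A_{n,2k}s_{2k}-s_{2k}T^{\mathrm{quot}}_{n,2k}$ and $N_{n,2k}=\pi A_{n,2k}|_{K_{2k}}=A_{n,2k}|_{K_{2k}}$ coincide with the paper's, and the converse is the same one-line application of $q_{2k}$.
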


\begin{proof}
The chosen section gives a direct-sum decomposition
\[
  R_{2k}=s_{2k}(I_{2k})\oplus K_{2k}.
\]
Thus every element of $R_{2k}$ is written uniquely as $s_{2k}(\phi)+u$ with $\phi\in I_{2k}$ and $u\in K_{2k}$.  We first determine what exactness implies on the kernel.  If $u\in K_{2k}$, then
\[
  q_{2k}(A_{n,2k}u)=T_{n,2k}q_{2k}(u)=T_{n,2k}(0)=0.
\]
Hence $A_{n,2k}u\in K_{2k}$.  Therefore $A_{n,2k}$ preserves the kernel, so the component from $K_{2k}$ to the quotient $I_{2k}$ is zero.  The restriction of $A_{n,2k}$ to $K_{2k}$ is the uniquely determined map
\[
  N_{n,2k}=A_{n,2k}|_{K_{2k}}\in\End(K_{2k}).
\]

It remains to describe the image of a section element $s_{2k}(\phi)$.  Since $A_{n,2k}$ is exact,
\[
  q_{2k}A_{n,2k}s_{2k}(\phi)=T_{n,2k}q_{2k}s_{2k}(\phi)=T_{n,2k}\phi.
\]
The element $s_{2k}(T_{n,2k}\phi)$ is the chosen representative in $R_{2k}$ of this quotient element.  Therefore the difference
\[
  A_{n,2k}s_{2k}(\phi)-s_{2k}(T_{n,2k}\phi)
\]
has zero q-bracket and lies in $K_{2k}$.  This defines a linear map
\[
  C_{n,2k}:I_{2k}\longrightarrow K_{2k},
  \qquad
  C_{n,2k}(\phi)=A_{n,2k}s_{2k}(\phi)-s_{2k}(T_{n,2k}\phi).
\]
Linearity follows from linearity of $A_{n,2k}$ and of the section.

Combining the two computations, for a general element $s_{2k}(\phi)+u$ we have
\[
  A_{n,2k}(s_{2k}(\phi)+u)
  =s_{2k}(T_{n,2k}\phi)+C_{n,2k}(\phi)+N_{n,2k}(u),
\]
which is exactly the displayed block form.  The maps $C_{n,2k}$ and $N_{n,2k}$ are unique because the decomposition into the section component and the kernel component is unique.

Conversely, suppose an operator has the displayed form.  Then
\[
\begin{aligned}
 q_{2k}A_{n,2k}(s_{2k}(\phi)+u)
 &=q_{2k}\bigl(s_{2k}(T_{n,2k}\phi)+C_{n,2k}(\phi)+N_{n,2k}(u)\bigr) \\
 &=T_{n,2k}\phi,
\end{aligned}
\]
because the last two terms lie in the kernel.  Since $q_{2k}(s_{2k}(\phi)+u)=\phi$, this proves $q_{2k}A_{n,2k}=T_{n,2k}q_{2k}$.
\end{proof}

\begin{remark}
\label{rem:param-count}
The theorem shows that, at fixed weight $2k$, the affine space of exact lifts of $T_{n,2k}$ is modeled on $\Hom(I_{2k},K_{2k})\oplus\End(K_{2k})$.  Writing $m_k=\dim I_{2k}$ and $r_k=\dim K_{2k}$, an exact lift therefore has $m_kr_k+r_k^2=r_k\dim R_{2k}$ free linear parameters.  None of them is constrained until the Hecke relations are imposed (\cref{thm:cocycle}).
\end{remark}

\begin{theorem}[Hecke relations and cocycle data]
\label{thm:cocycle}
Fix the section $s_{2k}$ and, for each $n\ge1$, maps $N_{n,2k}\in\End(K_{2k})$ and $C_{n,2k}\in\Hom(I_{2k},K_{2k})$, and let $A_{n,2k}$ be the exact lift with block form
\[
  A_{n,2k}=\begin{pmatrix}
  T^{\mathrm{quot}}_{n,2k}&0\\
  C_{n,2k}&N_{n,2k}
  \end{pmatrix},
\]
equivalently
\[
  A_{n,2k}\bigl(s_{2k}(\phi)+u\bigr)
  =s_{2k}(T_{n,2k}\phi)+C_{n,2k}(\phi)+N_{n,2k}(u).
\]
Then $\{A_{n,2k}\}_{n\ge1}$ is a full exact Hecke lift if and only if the kernel operators satisfy the Hecke relations
\[
  N_{m,2k}N_{n,2k}=\sum_{d\mid(m,n)}d^{2k-1}N_{mn/d^2,2k}
\]
and the correction maps satisfy the cocycle identity
\[
  C_{m,2k}T^{\mathrm{quot}}_{n,2k}+N_{m,2k}C_{n,2k}
  =\sum_{d\mid(m,n)}d^{2k-1}C_{mn/d^2,2k}.
\]
By \cref{thm:block-classification}, every full exact Hecke lift in weight $2k$ arises in this way.
\end{theorem}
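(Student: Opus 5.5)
The plan is a direct block-matrix computation. By \cref{thm:block-classification}, each $A_{n,2k}$ is an exact lift, so the only content is in the Hecke relations. I would compute the composite $A_{m,2k}A_{n,2k}$ on a general element $s_{2k}(\phi)+u$ with $\phi\in I_{2k}$ and $u\in K_{2k}$. I would then compare it with $\sum_{d\mid(m,n)}d^{2k-1}A_{mn/d^2,2k}$ applied to the same element, and read off the components with respect to the splitting $R_{2k}=s_{2k}(I_{2k})\oplus K_{2k}$.

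First, apply $A_{n,2k}$ to get $s_{2k}(T_{n,2k}\phi)+C_{n,2k}(\phi)+N_{n,2k}(u)$. Here I use Hecke-stability of $I_{2k}$, so that $T_{n,2k}\phi\in I_{2k}$ and the block form applies again. Then apply $A_{m,2k}$. On the section part this gives $s_{2k}(T_{m,2k}T_{n,2k}\phi)+C_{m,2k}(T_{n,2k}\phi)$. The kernel parts $C_{n,2k}(\phi)+N_{n,2k}(u)$ lie in $K_{2k}$, so only $N_{m,2k}$ acts on them. Altogether,
\[
  A_{m,2k}A_{n,2k}(s_{2k}(\phi)+u)
  =s_{2k}(T_{m,2k}T_{n,2k}\phi)
  +\bigl(C_{m,2k}T^{\mathrm{quot}}_{n,2k}+N_{m,2k}C_{n,2k}\bigr)(\phi)
  +N_{m,2k}N_{n,2k}(u).
\]
On the other side, linearity gives
\[
  s_{2k}\Bigl(\sum_d d^{2k-1}T_{mn/d^2,2k}\phi\Bigr)
  +\sum_d d^{2k-1}C_{mn/d^2,2k}(\phi)
  +\sum_d d^{2k-1}N_{mn/d^2,2k}(u).
\]

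The section components agree automatically. This is because $T_{n,2k}$ satisfies \eqref{eq:hecke-comp} on $\QM_{2k}$, hence on the stable subspace $I_{2k}$, and $s_{2k}$ is linear. Since the decomposition is direct, equality of the two operators is equivalent to equality of the $K_{2k}$-components for all $\phi$ and $u$. Setting $\phi=0$ yields the Hecke relations for the $N_{n,2k}$, and setting $u=0$ yields the cocycle identity. Conversely, if both identities hold, the three components match and the relations hold for $A_{n,2k}$.

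The final sentence of the statement follows from the uniqueness part of \cref{thm:block-classification}. There is no real obstacle. The only point needing care is that the composite stays in block form, which requires $I_{2k}$ to be Hecke-stable, and that the $K$-to-$I$ block vanishes, so kernel terms never feed back into the section component.
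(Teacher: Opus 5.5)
Your proposal is correct and follows the paper's proof: both compare $A_{m,2k}A_{n,2k}$ with $\sum_{d\mid(m,n)}d^{2k-1}A_{mn/d^2,2k}$ blockwise with respect to $R_{2k}=s_{2k}(I_{2k})\oplus K_{2k}$. The section block agrees by Hecke-stability of $I_{2k}$, the kernel block gives the relations for $N_{n,2k}$, and the off-diagonal block gives the cocycle identity. You evaluate on $s_{2k}(\phi)+u$ rather than multiplying block matrices formally, which is only a presentational difference.
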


\begin{proof}
Each $A_{n,2k}$ is an exact lift by \cref{thm:block-classification}.  Multiplying two operators in block form gives
\[
  A_{m,2k}A_{n,2k}
  =
  \begin{pmatrix}
  T^{\mathrm{quot}}_{m,2k}T^{\mathrm{quot}}_{n,2k} & 0\\
  C_{m,2k}T^{\mathrm{quot}}_{n,2k}+N_{m,2k}C_{n,2k} & N_{m,2k}N_{n,2k}
  \end{pmatrix},
\]
while
\[
  \sum_{d\mid(m,n)}d^{2k-1}A_{mn/d^2,2k}
  =
  \sum_{d\mid(m,n)}d^{2k-1}
  \begin{pmatrix}
  T^{\mathrm{quot}}_{mn/d^2,2k}&0\\
  C_{mn/d^2,2k}&N_{mn/d^2,2k}
  \end{pmatrix}.
\]
The Hecke relation
\[
  A_{m,2k}A_{n,2k}=\sum_{d\mid(m,n)}d^{2k-1}A_{mn/d^2,2k}
\]
is an equality of block matrices, hence holds if and only if it holds in each block.  The upper-left blocks agree unconditionally: their equality is the composition law \eqref{eq:hecke-comp} on $I_{2k}$, which holds because $I_{2k}$ is Hecke-stable.  Equality of the lower-right blocks is the displayed Hecke relation for the $N_{n,2k}$, and equality of the lower-left blocks is the cocycle identity.  The last assertion is the classification of \cref{thm:block-classification}.
\end{proof}

\begin{corollary}[Prime cases]
\label{cor:prime-cocycle}
Assume $A_{1,2k}=\Id$.  Then $N_{1,2k}=\Id_{K_{2k}}$ and $C_{1,2k}=0$.  If $p\ne q$ are primes, then
\[
  N_{p,2k}N_{q,2k}=N_{pq,2k}
\]
and
\[
  C_{p,2k}T^{\mathrm{quot}}_{q,2k}+N_{p,2k}C_{q,2k}=C_{pq,2k}.
\]
For a prime $p$,
\[
  N_{p,2k}^{2}=N_{p^2,2k}+p^{2k-1}\Id_{K_{2k}},
\]
and
\[
  C_{p,2k}T^{\mathrm{quot}}_{p,2k}+N_{p,2k}C_{p,2k}=C_{p^2,2k}.
\]
\end{corollary}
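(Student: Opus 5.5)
The corollary is a direct specialization of \cref{thm:cocycle}, so I plan to read off each identity from the block decomposition of \cref{thm:block-classification} and the two families of relations there. The only extra input is the normalization \(A_{1,2k}=\Id\), which I treat first.

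Apply the block form of \cref{thm:block-classification} to \(A_{1,2k}=\Id\). The identity preserves \(K_{2k}\), so \(N_{1,2k}=\Id|_{K_{2k}}=\Id_{K_{2k}}\). For the lower-left block I use the defining formula \(C_{1,2k}(\phi)=A_{1,2k}s_{2k}(\phi)-s_{2k}(T_{1,2k}\phi)\) from the proof of that theorem. Here \(T_{1,2k}\) is the identity by \cref{def:hecke}: only \(c=c'=1\), \(b=0\) contribute. Hence \(C_{1,2k}(\phi)=s_{2k}(\phi)-s_{2k}(\phi)=0\). Uniqueness of the blocks then gives both claims.

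Next I specialize the Hecke relations for \(N\) and the cocycle identity of \cref{thm:cocycle} to two index choices.

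\textbf{Distinct primes.} Take \(m=p\) and \(n=q\) with \(p\ne q\). Then \((m,n)=1\), so the sum over \(d\) has the single term \(d=1\). The relations become \(N_{p,2k}N_{q,2k}=N_{pq,2k}\) and \(C_{p,2k}T^{\mathrm{quot}}_{q,2k}+N_{p,2k}C_{q,2k}=C_{pq,2k}\).

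\textbf{Equal primes.} Take \(m=n=p\). Now \(d\in\{1,p\}\), and the right-hand sides become
\[
  N_{p^2,2k}+p^{2k-1}N_{1,2k}
  \quad\text{and}\quad
  C_{p^2,2k}+p^{2k-1}C_{1,2k}.
\]
Substituting \(N_{1,2k}=\Id_{K_{2k}}\) and \(C_{1,2k}=0\) from the first step yields the two stated prime-square identities.

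I expect no real obstacle. The one point to state clearly is that the hypothesis \(A_{1,2k}=\Id\) forces \(C_{1,2k}=0\) independently of the section, since \(T_{1,2k}=\Id\). The corollary presupposes that the family \(\{A_{n,2k}\}\) is a full exact Hecke lift, so \cref{thm:cocycle} applies, and I will state this explicitly. If full exactness is instead taken to include the \(m=n=1\) relation, I will also note that \(A_{1,2k}^2=A_{1,2k}\) holds, consistent with \(A_{1,2k}=\Id\).
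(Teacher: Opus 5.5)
Your proposal is correct and follows the paper's proof: you read off \(N_{1,2k}=\Id_{K_{2k}}\) and \(C_{1,2k}=0\) from the block form using \(T_{1,2k}=\Id\), then specialize the relations of \cref{thm:cocycle} to \((m,n)=(p,q)\), where only \(d=1\) contributes, and to \((m,n)=(p,p)\), where \(d=p\) contributes \(p^{2k-1}N_{1,2k}\) and \(p^{2k-1}C_{1,2k}\). Your explicit note that the corollary presupposes a full exact Hecke lift is a sensible clarification, because the Hecke relations alone only give \(A_{1,2k}^2=A_{1,2k}\) and not \(A_{1,2k}=\Id\).
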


\begin{proof}
Since $T_{1,2k}=\Id$, the assumption $A_{1,2k}=\Id$ gives $N_{1,2k}=\Id_{K_{2k}}$ and $C_{1,2k}=0$ directly from the block form.  The remaining identities are the relations of \cref{thm:cocycle} specialized to $(m,n)=(p,q)$ and $(m,n)=(p,p)$: in the first case the divisor sum has the single term $d=1$, and in the second it has the terms $d=1$ and $d=p$, the latter contributing $p^{2k-1}N_{1,2k}=p^{2k-1}\Id_{K_{2k}}$ to the kernel relation and $p^{2k-1}C_{1,2k}=0$ to the cocycle relation.
\end{proof}

\begin{remark}[Extension-theoretic reading]
\label{rem:extension}
\Cref{thm:cocycle} has a homological interpretation.  A full exact Hecke lift in weight $2k$ makes
\[
  0\longrightarrow K_{2k}\longrightarrow R_{2k}\xrightarrow{\;q_{2k}\;} I_{2k}\longrightarrow 0
\]
a short exact sequence of Hecke modules: the quotient carries the action $T^{\mathrm{quot}}_{n,2k}$, the kernel carries the action $N_{n,2k}$, and, relative to the chosen splitting, the correction maps $C_{n,2k}$ form a kernel-valued $1$-cocycle.  The cocycle depends on the section only up to coboundary: replacing $s_{2k}$ by $s_{2k}+h$ with $h:I_{2k}\to K_{2k}$ replaces the correction maps by
\[
  C'_{n,2k}
  =A_{n,2k}(s_{2k}+h)-(s_{2k}+h)T^{\mathrm{quot}}_{n,2k}
  =C_{n,2k}+N_{n,2k}h-hT^{\mathrm{quot}}_{n,2k}.
\]
The obstruction results of \cref{sec:obstructions} show that this freedom cannot be removed by naturality conditions: no exact lift is multiplicative, and none is strictly $Q_2$-semilinear.
\end{remark}
\section{Obstructions to natural exact lifts}
\label{sec:obstructions}

The construction in Section~\ref{sec:scalar} gives noncanonical exact lifts.  The results in this section examine two deliberately strong compatibility attempts: multiplicativity and strict compatibility with multiplication by $Q_2$.  Their failure shows where correction terms must enter.

We first record two low-weight facts.  Since
\[
  \mu(Q_2)=-\frac12\neq0,
\]
the q-bracket $\qbr{Q_2}$ is a nonzero multiple of $E_2$.  Define
\begin{equation}
  e_4=Q_2^2+2Q_4.
  \label{eq:e4-def}
\end{equation}
Then
\[
  \mu(e_4)=\mu(Q_2)^2+2\mu(Q_4)=\frac14+2\cdot\left(-\frac18\right)=0,
\]
so $\qbr{e_4}$ has no $E_2^2$ term.  Since
\[
  Q_2(\varnothing)=-\frac1{24},
  \qquad
  Q_4(\varnothing)=\beta_4=\frac7{5760},
\]
we have
\[
  e_4(\varnothing)=\frac1{576}+\frac{14}{5760}=\frac1{240}\neq0.
\]
Thus $\qbr{e_4}$ is a nonzero modular form of weight $4$, hence a nonzero multiple of $E_4$.

\begin{lemma}[Low-weight injectivity on the genuine algebra]
\label{lem:low-weight-inj}
The q-bracket maps
\[
  L_2\to\QM_2,
  \qquad
  L_4\to\QM_4
\]
are injective.  Moreover $Q_2$ maps to a nonzero multiple of $E_2$, and $e_4$ maps to a nonzero multiple of $E_4$.
\end{lemma}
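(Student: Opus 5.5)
The argument is a dimension count combined with the two bracket computations made just before the statement. First, identify the spaces. Since \(\Lam=\QQ[Q_2,Q_3,\ldots]\) with \(\wt(Q_j)=j\), a monomial of weight \(2\) must be \(Q_2\) itself. In weight \(4\) the monomials are \(Q_2^2\) and \(Q_4\), and \(Q_3\) cannot appear because it would need a weight-\(1\) partner. Hence \(L_2=\QQ Q_2\) and \(L_4=\Span\{Q_2^2,Q_4\}\), which has dimension \(2\), and we may take \(\{Q_2,e_4\}\) as a basis of \(L_4\).

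In weight \(2\), \cref{eq:Q2-bracket} gives \(\qbr{Q_2}=-E_2/24\neq0\). A nonzero linear map out of a one-dimensional space is injective, and this also gives the claim about \(Q_2\).

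In weight \(4\), the computation preceding the lemma shows that \(\qbr{e_4}\) is a nonzero multiple of \(E_4\). It has no \(E_2^2\) term because \(\mu(e_4)=0\) and \cref{eq:top-depth} applies. It is nonzero because its constant term is \(e_4(\varnothing)=1/240\), which equals the constant term of the bracket since the empty partition is the only one contributing to \(q^0\) and the denominator is \(1+O(q)\). This proves the second claim about \(e_4\).

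The remaining step is to show that \(\qbr{Q_2^2}\) is not proportional to \(E_4\). For this, use the top-depth formula \cref{eq:top-depth} with \(\mu(Q_2^2)=\mu(Q_2)^2=\tfrac14\neq0\). It shows that \(\qbr{Q_2^2}\) has a nonzero \(E_2^2\)-coefficient. Since \(\QM_4=\Span\{E_2^2,E_4\}\) and \(E_2^2,E_4\) are linearly independent, the images of \(Q_2^2\) and \(e_4\) are linearly independent. Therefore the map \(L_4\to\QM_4\) sends a basis to an independent set and is injective; in fact it is an isomorphism, since both spaces have dimension \(2\).

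Nothing here is a serious obstacle. The only points needing care are the linear independence of \(E_2^2\) and \(E_4\) in \(\QM=\QQ[E_2,E_4,E_6]\), which is immediate from algebraic independence, and the justification that the constant term of the bracket is the value at \(\varnothing\).
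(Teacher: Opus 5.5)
Your proof is correct and follows the paper's argument: you use the basis $\{Q_2^2,e_4\}$ of $L_4$, the nonzero top coefficient coming from $\mu(Q_2^2)=\tfrac14$, and $\mu(e_4)=0$ together with $e_4(\varnothing)=\tfrac1{240}$ to place $\qbr{e_4}$ on the $E_4$-line (where you wrote $\{Q_2,e_4\}$ as the basis of $L_4$, that is a slip for $\{Q_2^2,e_4\}$, which is what you actually use). The only deviation is in weight $2$, where you cite the explicit identity $\qbr{Q_2}=-E_2/24$ instead of the top-depth formula with $\mu(Q_2)=-\tfrac12$; this is equally valid and slightly more direct.
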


\begin{proof}
The space $L_2$ is one-dimensional, with basis $Q_2$.  We have already observed that $\mu(Q_2)=-1/2\ne0$, so Zagier's top-depth formula gives a nonzero $E_2$ coefficient in $\qbr{Q_2}$.  Hence $Q_2$ does not lie in the q-bracket kernel, and the map $L_2\to\QM_2$ is injective.

Now consider weight $4$.  The space $L_4$ is spanned by $Q_2^2$ and $Q_4$.  Instead of using $Q_4$ directly, use the basis
\[
  Q_2^2,
  \qquad
  e_4=Q_2^2+2Q_4.
\]
The first element has nonzero top $E_2^2$ coefficient, because
\[
  \mu(Q_2^2)=\mu(Q_2)^2=\frac14\ne0.
\]
Thus $\qbr{Q_2^2}$ has a nonzero $E_2^2$ component.  On the other hand, $\mu(e_4)=0$, so $\qbr{e_4}$ has no $E_2^2$ term.  We also computed $e_4(\varnothing)=1/240\ne0$, so the constant term of $\qbr{e_4}$ is nonzero.  Since a weight-$4$ quasimodular form with no $E_2^2$ component is a scalar multiple of $E_4$, this shows that $\qbr{e_4}$ is a nonzero multiple of $E_4$.

Therefore the two images $\qbr{Q_2^2}$ and $\qbr{e_4}$ have different top-depth behavior: one has a nonzero $E_2^2$ term and the other is a nonzero modular form.  They are linearly independent in
\[
  \QM_4=\Span_\QQ(E_2^2,E_4).
\]
Since $\dim L_4=2$, the q-bracket map $L_4\to\QM_4$ is injective.
\end{proof}

\begin{theorem}[No multiplicative exact lift under the strong algebra-homomorphism condition]
\label{thm:no-multiplicative}
For every prime $p$, there is no graded algebra endomorphism
\[
  A_p:\Lam\to\Lam
\]
such that
\[
  \qbr{A_pf}=T_{p,\wt(f)}\qbr{f}
\]
for every homogeneous $f\in\Lam$.
\end{theorem}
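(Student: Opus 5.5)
The plan is to compare eigenvalues on \(\qbr{Q_2}\) and \(\qbr{Q_2^2}\), using the injectivity of \cref{lem:low-weight-inj} in weights \(2\) and \(4\). Suppose such an \(A_p\) exists.

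\textbf{Weight 2.} Since \(A_p\) is graded, \(A_pQ_2\in L_2=\QQ Q_2\), say \(A_pQ_2=cQ_2\). Then \(\qbr{A_pQ_2}=c\qbr{Q_2}\). On the other hand \(\qbr{Q_2}\) is a nonzero multiple of \(E_2\), and by \cref{lem:top-hecke} with \(k=1\) we have \(T_{p,2}E_2=\sigma_1(p)E_2=(1+p)E_2\), because \(\QM_2\) is spanned by \(E_2\) and the top coefficient scales by \(n^{0}\sigma_1(n)\). Exactness therefore forces \(c=1+p\).

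\textbf{Weight 4.} Multiplicativity gives \(A_p(Q_2^2)=(1+p)^2Q_2^2\), so the hypothesis yields
\[
T_{p,4}\qbr{Q_2^2}=(1+p)^2\qbr{Q_2^2}.
\]
The top-depth formula \eqref{eq:top-depth} shows that \(\qbr{Q_2^2}\) has nonzero \(E_2^2\) coefficient, since \(\mu(Q_2^2)=\tfrac14\). Applying \cref{lem:top-hecke} with \(k=2\) to the \(E_2^2\) coefficient of both sides gives
\[
p(1+p)\Top(\qbr{Q_2^2})=(1+p)^2\Top(\qbr{Q_2^2}).
\]
Hence \(p=1+p\), which is a contradiction.

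The main obstacle is only bookkeeping: confirming the top-coefficient multiplier \(n^{k-1}\sigma_1(n)\) for \(k=1\) and \(k=2\), and checking that the argument needs no Hecke action on all of \(\QM_4\) beyond \cref{lem:top-hecke}. Equivalently, one could use \cref{cor:top-eigenvalue}, which gives \(\mu(A_pQ_2^2)=p(1+p)\mu(Q_2^2)\); multiplicativity gives \(\mu(A_pQ_2^2)=\mu(A_pQ_2)^2=(1+p)^2\mu(Q_2)^2\). These disagree because \(\mu(Q_2)\neq0\).
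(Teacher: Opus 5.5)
Your proof is correct and follows the paper's argument: you pin down $A_pQ_2=(1+p)Q_2$ in weight $2$, then compare the multiplier $(1+p)^2$ forced by multiplicativity with the top-depth multiplier $p(1+p)$ from \cref{lem:top-hecke} on the nonzero $E_2^2$ coefficient of $\qbr{Q_2^2}$. Your appeal to \cref{lem:top-hecke} with $k=1$ to justify $T_{p,2}E_2=(1+p)E_2$ is a small improvement in rigor. The paper asserts this step without proof, and its stated Eisenstein eigenvalue formula only covers weights $w\ge4$.
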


\begin{proof}
Assume, for contradiction, that such an algebra endomorphism $A_p$ exists.  In weight $2$, the space $L_2$ is one-dimensional and spanned by $Q_2$.  Since $\qbr{Q_2}$ is a nonzero multiple of $E_2$, exactness gives
\[
  \qbr{A_p(Q_2)}=T_{p,2}\qbr{Q_2}=(1+p)\qbr{Q_2}.
\]
By the injectivity of $L_2\to\QM_2$ from Lemma~\ref{lem:low-weight-inj}, this forces
\[
  A_p(Q_2)=(1+p)Q_2.
\]

If $A_p$ were multiplicative, then its value on $Q_2^2$ would be determined by its value on $Q_2$:
\[
  A_p(Q_2^2)=A_p(Q_2)^2=(1+p)^2Q_2^2.
\]
Taking q-brackets gives
\[
  \qbr{A_p(Q_2^2)}=(1+p)^2\qbr{Q_2^2}.
\]
In particular, the coefficient of the top-depth term $E_2^2$ would be multiplied by $(1+p)^2$.

However, exactness in weight $4$ says instead that
\[
  \qbr{A_p(Q_2^2)}=T_{p,4}\qbr{Q_2^2}.
\]
By Lemma~\ref{lem:top-hecke}, the top $E_2^2$ coefficient of a weight-$4$ quasimodular form is multiplied by
\[
  p^{2-1}\sigma_1(p)=p(1+p).
\]
The top $E_2^2$ coefficient of $\qbr{Q_2^2}$ is nonzero because $\mu(Q_2^2)=1/4$.  Thus the same nonzero coefficient would have to be multiplied both by $(1+p)^2$ and by $p(1+p)$.  Since
\[
  (1+p)^2-p(1+p)=1+p\ne0,
\]
this is impossible.  Hence no multiplicative exact lift exists.
\end{proof}

\begin{remark}
\label{rmk:mult-obstruction}
Theorem~\ref{thm:no-multiplicative} is an expected obstruction, since Hecke operators are not multiplicative and therefore no lift of them can be.  Its content is quantitative rather than qualitative.  The proof isolates the exact discrepancy, namely that the top-depth multiplier is $p\sigma_1(p)$ while multiplicativity would force $\sigma_1(p)^2$, and Theorem~\ref{thm:no-q2-semi} runs the same kind of single-coefficient comparison on a different invariant, with the constant term at the empty partition in place of the top-depth coefficient.  The obstruction is not an artifact of the choice of generators $Q_j$.  It is a statement about multiplication in $\Lam$ against the Hecke action on $\QM$.  Any exact lift is therefore linear and correction-term driven rather than a ring endomorphism.
\end{remark}

The next theorem rules out a weaker naturality condition.  Multiplication by $Q_2$ is the most basic operation that raises quasimodular depth.  An exact lift might be expected to commute with this operation up to the weight-$2$ Hecke eigenvalue.  This is already impossible on the Eisenstein line.

\begin{definition}[Strict $Q_2$-semilinearity]
Fix a prime $p$.  A family of exact lifts is strictly $Q_2$-semilinear on an element $f\in L_{2k}$ if
\[
  A_{p,2k+2}(Q_2f)=(1+p)Q_2A_{p,2k}(f).
\]
\end{definition}

\begin{theorem}[No strict $Q_2$-semilinear Eisenstein lift]
\label{thm:no-q2-semi}
For every prime $p$, no exact Hecke lift on $\Lam$ is strictly $Q_2$-semilinear on the element $e_4=Q_2^2+2Q_4$.
\end{theorem}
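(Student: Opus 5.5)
The plan is to pin down \(A_{p,4}(e_4)\) completely using weight-\(4\) injectivity, and then show that the value this forces on \(Q_2e_4\) in weight \(6\) contradicts exactness. The contradiction will come from comparing a single invariant, the constant term of the \(q\)-bracket (its value at the empty partition), as foreshadowed in Remark~\ref{rmk:mult-obstruction}.

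\emph{Step 1 (weight \(4\)).} By Lemma~\ref{lem:low-weight-inj}, \(\qbr{e_4}\) is a nonzero multiple of \(E_4\), so exactness gives
\[
\qbr{A_{p,4}e_4}=T_{p,4}\qbr{e_4}=\sigma_3(p)\qbr{e_4}=(1+p^3)\qbr{e_4}.
\]
Since \(L_4\to\QM_4\) is injective by the same lemma, and the exact lift is on \(\Lam\), this forces \(A_{p,4}(e_4)=(1+p^3)e_4\). Assuming strict \(Q_2\)-semilinearity on \(e_4\), we therefore get
\[
A_{p,6}(Q_2e_4)=(1+p)(1+p^3)\,Q_2e_4 .
\]
Taking \(q\)-brackets and using exactness in weight \(6\) then yields
\[
T_{p,6}\qbr{Q_2e_4}=(1+p)(1+p^3)\qbr{Q_2e_4}.
\]

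\emph{Step 2 (constant terms).} I will show that for \(F\in\QM_w\) with constant term \(a_0\), the constant term of \(T_{p,w}F\) is \((1+p^{w-1})a_0\). I intend to derive this directly from Definition~\ref{def:hecke}, so as not to rely on the Fourier formula that the paper says it will not use. Write \(F=\sum_{n\ge0} a_nq^n\).
\begin{itemize}
\item For \((c,c')=(1,p)\), the single summand \(F(p\tau)\) carries weight \(p^{-1}p^w\). Its constant term is \(a_0\), contributing \(p^{w-1}a_0\).
\item For \((c,c')=(p,1)\), average the summands \(F((\tau+b)/p)\) over \(b\). This kills every \(q^{n/p}\) with \(p\nmid n\), and the constant term is \(a_0\) in each summand. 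The \(p\) summands, each with weight \(p^{-1}\), contribute \(a_0\).
\end{itemize}
The total is \((1+p^{w-1})a_0\). On the other side, the constant term of \(\qbr{f}\) is \(f(\varnothing)\), by comparing the \(q^0\) coefficients in \eqref{eq:qbracket}. Hence \(\qbr{Q_2e_4}\) has constant term
\[
Q_2(\varnothing)\,e_4(\varnothing)=-\tfrac1{24}\cdot\tfrac1{240}\neq0 .
\]

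\emph{Step 3 (contradiction).} Comparing constant terms in the identity from Step 1 gives
\[
(1+p^5)=(1+p)(1+p^3)=1+p+p^3+p^4 .
\]
This is equivalent to \(p(p^4-p^3-p^2-1)=0\). That is impossible for a prime \(p\), since any integer root of \(x^4-x^3-x^2-1\) would have to divide \(1\), and neither \(x=1\) nor \(x=-1\) is a root.

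The main obstacle is Step 2: verifying the constant-term behaviour of \(T_{p,w}\) on quasimodular (not only modular) forms directly from the averaging definition. This is a clean computation because the substitutions act termwise on \(q\)-expansions, so no depth considerations enter. I would also note that \(\mu(Q_2e_4)=\mu(Q_2)\mu(e_4)=0\), so no top-depth information is available here, which is exactly why the constant term is the right invariant.
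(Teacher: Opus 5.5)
Your proof is correct and follows the paper's argument: weight-$4$ injectivity forces $A_{p,4}(e_4)=(1+p^3)e_4$, and the nonzero constant term $(Q_2e_4)(\varnothing)=-\tfrac1{5760}$ would then have to be scaled both by $1+p^5$ (exactness) and by $(1+p)(1+p^3)$ (semilinearity), whereas the paper only asserts the constant-term multiplier $1+p^{w-1}$ and you derive it directly from the averaging definition. One slip in Step~3: $x=-1$ \emph{is} a root of $x^4-x^3-x^2-1=(x+1)(x^3-2x^2+x-1)$, so that claim is false; the argument survives because a prime $p\ge2$ cannot divide $1$, or alternatively by the paper's bound $p^4\ge 2p^3>p^3+p^2+1$.
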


\begin{proof}
By Lemma~\ref{lem:low-weight-inj}, $\qbr{e_4}$ is a nonzero multiple of $E_4$.  Exactness in weight $4$ therefore determines the value of $A_{p,4}$ on the line spanned by $e_4$.  Since
\[
  T_{p,4}E_4=(1+p^3)E_4,
\]
we must have
\[
  A_{p,4}(e_4)=(1+p^3)e_4.
\]

Suppose strict $Q_2$-semilinearity held on $e_4$.  Then
\[
\begin{aligned}
  A_{p,6}(Q_2e_4)
  &=(1+p)Q_2A_{p,4}(e_4)\\
  &=(1+p)(1+p^3)Q_2e_4.
\end{aligned}
\]
Taking q-brackets, this would imply that the constant term of $\qbr{Q_2e_4}$ is multiplied by $(1+p)(1+p^3)$.

We now compute that constant term and compare it with exactness.  The constant term of a normalized q-bracket is the value of the function on the empty partition: the numerator has constant term $f(\varnothing)$ and the denominator has constant term $1$.  Thus
\[
  \operatorname{CT}\qbr{Q_2e_4}=(Q_2e_4)(\varnothing).
\]
Using
\[
  Q_2(\varnothing)=-\frac1{24},
  \qquad
  e_4(\varnothing)=\frac1{240},
\]
we get
\[
  (Q_2e_4)(\varnothing)= -\frac1{24}\cdot\frac1{240}=-\frac1{5760}\ne0.
\]

Since $Q_2e_4$ has weight $6$, exactness gives
\[
  \qbr{A_{p,6}(Q_2e_4)}=T_{p,6}\qbr{Q_2e_4}.
\]
Under the Hecke operator $T_{p,6}$, the constant term of a weight-$6$ form is multiplied by
\[
  \sigma_5(p)=1+p^5.
\]
Therefore exactness requires the nonzero constant term above to be multiplied by $1+p^5$.  Strict $Q_2$-semilinearity would require the same nonzero number to be multiplied by
\[
  (1+p)(1+p^3)=1+p+p^3+p^4.
\]
These two multipliers are unequal for every prime $p$.  Indeed,
\[
  1+p^5-(1+p+p^3+p^4)=p(p^4-p^3-p^2-1),
\]
and $p^4-p^3-p^2-1>0$ for every prime $p\ge2$, since $p^4\ge 2p^3=p^3+p^3>p^3+p^2+1$.  This contradiction proves the theorem.
\end{proof}
Therefore no family of exact lifts on $\Lam$ is fully $Q_2$-semilinear in every weight, since the pair of weights $(4,6)$ already fails. We restate that this also obstructs multiplicativity. Whether full $Q_2$-semilinearity fails at a given weight $2k>4$ is a separate question.\section{Injectivity of the lowering operator and kernel transport}
\label{sec:Binj}

This section proves \cref{thm:main-injectivity}.  We first prove that $B$ is injective on the genuine subspace, and then use this to transport Hecke actions on kernels between adjacent weights. Recall from \cref{sec:intro} the definitions of $L_m$ and $R_m$.

\begin{theorem}[Injectivity of $B$ on the genuine subspace]
\label{thm:Binj}
For every $m\ge4$, the restriction
\[
  B:L_m\to R_{m-2}
\]
is injective.
\end{theorem}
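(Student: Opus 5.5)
The plan is to filter by monomial length and reduce the statement to a computation in the associated graded algebra. For a monomial \(Q_{a_1}\cdots Q_{a_r}\) (with all \(a_i\ge1\)), call \(r\) its length, and let \(R^{(\le r)}\) be the span of monomials of length at most \(r\). Given \(0\ne f\in L_m\), write \(f=f_r+(\text{length}<r)\), where \(f_r\neq0\) is the top-length component and involves only \(Q_j\) with \(j\ge2\). It then suffices to show that the length-\(r\) component of \(Bf\) is a nonzero multiple of a nondegenerate operator applied to \(f_r\). This requires checking that all other contributions to \(Bf\) have length \(<r\).

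\emph{Step 1: leading part of \(B\).} The operator \(D\) differentiates twice and multiplies by one \(Q_{a+b}\), so it sends length \(\ell\) to length at most \(\ell-1\); the term \(Q_0=1\) lowers length even further. The derivation \(\partial\) sends \(Q_j\mapsto Q_{j-1}\). This preserves length for \(j\ge2\) and lowers it for \(j=1\), since \(Q_1\mapsto Q_0=1\). Let \(\delta\) be the length-preserving derivation with \(\delta(Q_j)=Q_{j-1}\) for \(j\ge2\) and \(\delta(Q_1)=0\). Then \(\partial\equiv\delta\) modulo terms of lower length, and therefore
\[
  Bf\equiv-\tfrac12\,\delta^2 f_r \pmod{R^{(\le r-1)}}.
\]
Injectivity of \(B\) on \(L_m\) thus reduces to injectivity of \(\delta^2\) on the length-\(r\), weight-\(m\) part of \(\Lam\).

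\emph{Step 2: symmetric-polynomial model.} Identify length-\(r\) monomials with symmetric polynomials in \(x_1,\dots,x_r\) via
\[
  Q_{a_1}\cdots Q_{a_r}\longmapsto \sum_{\text{distinct perms}}x_1^{a_1}\cdots x_r^{a_r}.
\]
This is a linear isomorphism from the length-\(r\) span onto the symmetric polynomials divisible by \(e_r=x_1\cdots x_r\). Under this identification, \(\delta\) becomes the truncated operator
\[
  g\mapsto\sum_i\frac{g-g|_{x_i=0}}{x_i},
\]
since the truncation kills the exponent-\(1\) terms exactly as \(\delta(Q_1)=0\) does. On polynomials divisible by \(e_r\) the truncation is vacuous, so \(\delta\) acts there as multiplication by \(e_{r-1}/e_r\). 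The genuine subspace corresponds to polynomials divisible by \(e_r^2\). For such \(g\), the polynomial \(\delta g=g\,e_{r-1}/e_r\) is still divisible by \(e_r\), so the second application is also untruncated and
\[
  \delta^2 g=g\,\frac{e_{r-1}^2}{e_r^2}.
\]
This is multiplication by a nonzero rational function in the domain \(\QQ(x_1,\dots,x_r)\), hence injective.

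\emph{Step 3: conclude.} By Step 2, \(\delta^2f_r\ne0\), so by Step 1 the top-length component of \(Bf\) is nonzero and \(Bf\neq0\). The corollary for kernels then follows from \eqref{eq:zagier-lower}: if \(\qbr{f}=0\), then \(\qbr{Bf}=d\qbr{f}=0\).

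The step I expect to need the most care is Step 1. One must verify that the cross terms of \(\partial^2\) and all of \(D\) really drop length, including the terms where \(\partial\) hits a \(Q_2\) and then \(Q_1\mapsto1\). One must also check that the monomial-to-symmetric-polynomial dictionary intertwines \(\delta\) with the truncated division operator, including multiplicities from repeated indices. The hypothesis \(m\ge4\) appears inessential to this argument, apart from the degenerate low weights, which can be checked by hand.
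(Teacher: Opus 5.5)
\textbf{Verdict: the argument has a genuine gap in Step 2.}

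Your Step 1 is correct and is exactly the paper's reduction. For the length filtration the associated graded of $B$ is $-\tfrac12\delta^2$, so it suffices to show that $\delta^2$ is injective on each length component of $L_m$.

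The gap is that your polynomial model of $\delta$ is wrong. With the dictionary $Q_a\mapsto x^a$, the rule $\delta(Q_1)=0$ requires $\delta$ to kill any variable that occurs to exponent $1$. But $(g-g|_{x_i=0})/x_i$ kills only monomials in which $x_i$ is \emph{absent}, and it sends $x_i^1$ to $x_i^0$; already for $Q_1\mapsto x_1$ your operator returns $1\neq 0$. So multiplication by $e_{r-1}/e_r$ on the length-$r$ image models $\partial$ with $Q_0$ kept as a formal factor, not $\delta$. The correct statement (using the full symmetrization, see below) is $\delta g=\pi(g\,e_{r-1}/e_r)$, where $\pi$ discards every monomial in which some variable has exponent $0$.

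For genuine $g$, which is divisible by $e_r^2$, the first application is indeed untruncated. However, $\delta g$ is only divisible by $e_r$. Whenever the first $\delta$ lowers a factor $Q_2$ it creates a $Q_1$, i.e.\ an exponent-$1$ variable, and the second $\delta$ kills exactly those terms. Hence $\delta^2 g=\pi\bigl(g\,e_{r-1}^2/e_r^2\bigr)$, and nonvanishing of $g\,e_{r-1}^2/e_r^2$ says nothing about its image under $\pi$.

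Your argument therefore proves too much. For $Q_2$ (so $r=1$, $g=x_1^2$) it gives $\delta^2 g=1\neq0$, whereas $\delta^2Q_2=\delta Q_1=0$. The graded statement is false in weight $2$; your remark that $m\ge4$ seemed inessential was the symptom.

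Separately, summing over distinct permutations does not intertwine $\delta$ with the per-variable rule. We have $\delta(Q_2^2)=2Q_2Q_1\mapsto 2(x_1^2x_2+x_1x_2^2)$, while lowering one exponent of $x_1^2x_2^2$ at a time gives only $x_1^2x_2+x_1x_2^2$. You need the full sum over $S_r$.

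What is missing is an argument that the top-length component survives this truncation. The paper gets it from a different dictionary, $Q_j\mapsto x^{j-1}/(j-1)!$, fully symmetrized. Under it, $\delta$ becomes the honest derivation $\Delta=\sum_i\partial/\partial x_i$, and the factor $Q_1\mapsto 1$ is killed automatically. The genuine length-$r$ part becomes the symmetric polynomials divisible by $x_1\cdots x_r$, with one factor of $e_r$ rather than two.

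Now suppose $\Delta^2F=0$. Then $t\mapsto F(x_1+t,\dots,x_r+t)$ has zero second derivative, so it is affine in $t$. It also vanishes at $t=-x_1,\dots,-x_r$. For $r\ge2$ and pairwise distinct $x_i$ it is therefore identically zero, and $F=0$ by Zariski density. For $r=1$ the component is spanned by $Q_m\mapsto x^{m-1}/(m-1)!$, whose second derivative is nonzero because $m\ge3$. This is exactly where the weight hypothesis enters, and why no argument of this type can work for $m=2$.
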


\begin{proof}
Filter $R$ by monomial length: a monomial $Q_{\lambda_1}\cdots Q_{\lambda_r}$ has length $r$.  We compare the highest-length part of $Bf$ with the highest-length part of $f$.  The operator $D$ is second order and replaces two factors by one factor, so it lowers length by one.  Thus $D$ contributes nothing to the associated graded for the length filtration.

The operator $\partial$ sends $Q_j$ to $Q_{j-1}$.  When $j>1$, this preserves the number of factors.  When $j=1$, it sends $Q_1$ to $Q_0=1$, which lowers length.  On the associated graded, therefore, the length-preserving part of $\partial$ is the derivation
\[
  \delta(Q_j)=Q_{j-1}\quad(j\ge2),
  \qquad
  \delta(Q_1)=0.
\]
Consequently the associated-graded operator of $B=\frac12(D-\partial^2)$ is
\[
  \operatorname{gr}(B)=-\frac12\delta^2.
\]
To prove injectivity of $B$ it is enough to prove injectivity of $\delta^2$ on each homogeneous length component of $L_m$.  Indeed, if $0\ne f\in L_m$ and $f_r$ is its highest nonzero length component, then all lower length components of $f$ can contribute only terms of length at most $r-1$ to $Bf$, while the length-$r$ component of $Bf$ is $-\frac12\delta^2 f_r$.  Thus this top component cannot be cancelled by lower-length terms.  We therefore prove that $\delta^2$ is injective on every length component of $L_m$.

Fix a length $r$.  The length-$r$ part of $L_m$ is spanned by monomials $Q_{\lambda_1}\cdots Q_{\lambda_r}$ with all $\lambda_i\ge2$ and $\sum_i\lambda_i=m$.  Identify this space with symmetric homogeneous polynomials by sending each factor $Q_j$ to $x^{j-1}/(j-1)!$ and symmetrizing in $r$ variables.  Under this identification, applying $\delta$ to one factor corresponds to differentiating with respect to the corresponding variable, so $\delta$ becomes
\[
  \Delta=\sum_{i=1}^r\frac{\partial}{\partial x_i}.
\]
Because all parts $\lambda_i$ are at least $2$, every monomial in the corresponding polynomial contains each variable at least once.  Hence the associated symmetric polynomial $F(x_1,\ldots,x_r)$ is divisible by
\[
  x_1x_2\cdots x_r.
\]

Suppose now that $\Delta^2F=0$.  For arbitrary $x=(x_1,\ldots,x_r)$, define a one-variable polynomial
\[
  G_x(t)=F(x_1+t,\ldots,x_r+t).
\]
By the chain rule,
\[
  G_x'(t)=(\Delta F)(x_1+t,\ldots,x_r+t),
  \qquad
  G_x''(t)=(\Delta^2F)(x_1+t,\ldots,x_r+t)=0.
\]
Thus $G_x(t)$ is affine in $t$.  But $F$ is divisible by $x_1\cdots x_r$, so $G_x(t)$ is divisible by
\[
  \prod_{i=1}^r(x_i+t).
\]
If $r\ge2$ and the $x_i$ are chosen pairwise distinct, this product has at least two distinct roots as a polynomial in $t$.  A nonzero affine polynomial cannot have two distinct roots, so $G_x(t)$ must be identically zero for all such $x$.  Since the set of $x$ with pairwise distinct coordinates is Zariski dense, this forces $F=0$.

When $r=1$, the length-one component of $L_m$ is spanned by $Q_m$, which corresponds to the polynomial $x^{m-1}/(m-1)!$.  Since $m\ge4$, its second derivative is nonzero.  Thus $\delta^2$ is injective on every length component of $L_m$.  The associated-graded argument above now shows that $Bf\ne0$ for every nonzero $f\in L_m$.
\end{proof}

\begin{corollary}
\label{cor:Binj-kernel}
For $k\ge2$, the map
\[
  B:K^\Lambda_{2k}\to K_{2k-2}
\]
is injective.
\end{corollary}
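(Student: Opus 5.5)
The corollary is obtained by restricting \cref{thm:Binj} and then using Zagier's identity \eqref{eq:zagier-lower} to control the target.

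First, $K^\Lambda_{2k}=K_{2k}\cap L_{2k}$ is a subspace of $L_{2k}$. Since $k\ge2$, the weight $2k$ is at least $4$. So \cref{thm:Binj}, applied with $m=2k$, shows that $B$ restricted to $K^\Lambda_{2k}$ is injective as a map into $R_{2k-2}$.

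Second, we check that $B$ sends $K^\Lambda_{2k}$ into $K_{2k-2}$. Let $u\in K^\Lambda_{2k}$. The operator $B$ is homogeneous of degree $-2$, so $Bu\in R_{2k-2}$. Zagier's identity gives
\[
  \qbr{Bu}=d\qbr{u}=d(0)=0,
\]
so $q_{2k-2}(Bu)=0$, that is, $Bu\in K_{2k-2}$.

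Together these two steps give the claim. The image $Bu$ may involve $Q_1$, as \eqref{eq:Bg6} shows, which is why the target must be the formal kernel $K_{2k-2}$ rather than $K^\Lambda_{2k-2}$. All the substance lies in \cref{thm:Binj}; the corollary itself needs only that $d$ is linear and kills $0$.
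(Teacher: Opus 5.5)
Your proof is correct and is essentially the paper's argument. Zagier's identity \eqref{eq:zagier-lower} shows that $B$ maps $K^\Lambda_{2k}$ into $K_{2k-2}$, and injectivity is inherited by restricting \cref{thm:Binj} with $m=2k\ge 4$.
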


\begin{proof}
If $u\in K_{2k}$, then $\qbr{Bu}=d\qbr{u}=0$ by \eqref{eq:zagier-lower}.  Thus $B$ maps $K^\Lambda_{2k}$ into $K_{2k-2}$.  Injectivity follows from Theorem~\ref{thm:Binj}.
\end{proof}

\begin{definition}[Kernel-strict lowering]
\label{def:kernel-strict}
A family of exact lifts preserves the genuine kernel in weight $2k$ if
\[
  A_{n,2k}(K^\Lambda_{2k})\subseteq K^\Lambda_{2k}
  \qquad(n\ge1).
\]
It satisfies kernel-strict lowering in weight $2k$ if
\[
  BA_{n,2k}u=nA_{n,2k-2}Bu
  \qquad(u\in K^\Lambda_{2k},\ n\ge1).
\]
\end{definition}

Hecke lifts satisfying both conditions exist.  For example, the scalar-kernel lifts constructed in \cref{sec:scalar} preserve the genuine kernels and satisfy kernel-strict lowering (\cref{cor:scalar-strict}).

\begin{theorem}[Kernel transport]
\label{thm:kernel-transport}
Let $\{A_{n,2j}\}$ be exact Hecke lifts preserving the genuine kernels and satisfying kernel-strict lowering.  Set
\[
  W_{2k-2}=B(K^\Lambda_{2k})\subseteq K_{2k-2}.
\]
Then $W_{2k-2}$ is stable under $A_{n,2k-2}$ for every $n$, and
\[
  B:K^\Lambda_{2k}\xrightarrow{\sim}W_{2k-2}
\]
intertwines the Hecke action on $K^\Lambda_{2k}$ with the $n$-twisted lower-weight action on $W_{2k-2}$.  In particular, if $f\in K^\Lambda_{2k}$ satisfies $A_{n,2k}f=\alpha_nf$ for all $n$, then $Bf\neq0$ and $A_{n,2k-2}(Bf)=(\alpha_n/n)\,Bf$.
\end{theorem}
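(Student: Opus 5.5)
The proof combines \cref{cor:Binj-kernel} with the two hypotheses of \cref{def:kernel-strict}; it needs no further input.

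\emph{Step 1: $B$ is an isomorphism onto $W_{2k-2}$.} By \cref{cor:Binj-kernel}, $B$ maps $K^\Lambda_{2k}$ injectively into $K_{2k-2}$. Hence its corestriction to the image $W_{2k-2}=B(K^\Lambda_{2k})$ is a linear isomorphism $K^\Lambda_{2k}\xrightarrow{\sim}W_{2k-2}$. This uses $k\ge2$, which is implicit since the statement concerns weights $2k$ and $2k-2$ with $K^\Lambda_{2k}$ nonzero only from weight $6$ on.

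\emph{Step 2: stability of $W_{2k-2}$.} Take $w\in W_{2k-2}$ and write $w=Bu$ with $u\in K^\Lambda_{2k}$. Kernel-strict lowering gives
\[
  A_{n,2k-2}w=\tfrac1n\,B A_{n,2k}u .
\]
Since the lifts preserve the genuine kernel, $A_{n,2k}u\in K^\Lambda_{2k}$. Therefore the right-hand side lies in $B(K^\Lambda_{2k})=W_{2k-2}$. Dividing by $n\ge1$ is harmless over $\QQ$. This is the one place where kernel preservation is needed: without it, $A_{n,2k}u$ could leave $L_{2k}$, and the image might escape $W_{2k-2}$.

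\emph{Step 3: intertwining.} Write $N_n=A_{n,2k}|_{K^\Lambda_{2k}}$ and $M_n=A_{n,2k-2}|_{W_{2k-2}}$. Kernel-strict lowering reads $B N_n=nM_nB$ on $K^\Lambda_{2k}$. Since $B$ is invertible onto $W_{2k-2}$, this is equivalent to
\[
  N_n=B^{-1}(nM_n)B,
\]
so the two actions are conjugate under $B$. For the eigenvector statement, suppose $A_{n,2k}f=\alpha_nf$ with $f\ne0$ in $K^\Lambda_{2k}$. Then $Bf\ne0$ by Step 1, and
\[
  nA_{n,2k-2}Bf=BA_{n,2k}f=\alpha_nBf,
\]
so $A_{n,2k-2}(Bf)=(\alpha_n/n)\,Bf$.

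\emph{Main obstacle.} There is no real obstacle, because the hard input, the injectivity of $B$ on $L_{2k}$, is already established in \cref{thm:Binj}. What remains is bookkeeping, and the one point to check is that $W_{2k-2}$ really lies in $K_{2k-2}$. This follows from Zagier's identity \eqref{eq:zagier-lower}, exactly as in the proof of \cref{cor:Binj-kernel}.
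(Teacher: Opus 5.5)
Your proposal is correct and follows the paper's proof essentially verbatim: stability of $W_{2k-2}$ from kernel-strict lowering plus kernel preservation (dividing by $n$), conjugacy of the two actions under $B$, and injectivity of $B$ from \cref{cor:Binj-kernel} and \cref{thm:Binj}. Your explicit assumption $f\neq0$ in the eigenvector claim is a hypothesis the statement leaves tacit, and it is needed for the conclusion $Bf\neq0$.
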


\begin{proof}
Let $w=Bu\in W_{2k-2}$ with $u\in K^\Lambda_{2k}$.  Kernel-strict lowering gives
\[
  nA_{n,2k-2}w=nA_{n,2k-2}Bu=BA_{n,2k}u.
\]
Since the lift preserves the genuine kernel, $A_{n,2k}u\in K^\Lambda_{2k}$, and hence the right side lies in $B(K^\Lambda_{2k})=W_{2k-2}$.  Since $n\neq0$, $A_{n,2k-2}w\in W_{2k-2}$.  The same identity gives the intertwining relation, and Corollary~\ref{cor:Binj-kernel} gives the isomorphism.  For the last assertion, apply $B$ to $A_{n,2k}f=\alpha_nf$ and use kernel-strict lowering: $nA_{n,2k-2}Bf=BA_{n,2k}f=\alpha_nBf$, and $Bf\neq0$ by Theorem~\ref{thm:Binj}.
\end{proof}

Assume the hypotheses of Theorem~\ref{thm:kernel-transport}.  Put $r_k=\dim K^\Lambda_{2k}$ and, for each $n\ge1$, set
\[
  \chi^\Lambda_{n,k}(X)=\det\left(XI-A_{n,2k}|_{K^\Lambda_{2k}}\right),
  \qquad
  \chi^K_{n,k-1}(X)=\det\left(XI-A_{n,2k-2}|_{K_{2k-2}}\right).
\]

\begin{proposition}[Divisibility of characteristic polynomials]
\label{thm:spectral-divisibility}
Under these hypotheses,
\[
  n^{-r_k}\chi^\Lambda_{n,k}(nX)
\]
divides $\chi^K_{n,k-1}(X)$ in $\QQ[X]$.
\end{proposition}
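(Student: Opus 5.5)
The plan is to read off the divisibility from the invariant subspace $W_{2k-2}=B(K^\Lambda_{2k})$ of \cref{thm:kernel-transport}. A characteristic polynomial of a restriction to an invariant subspace divides the full characteristic polynomial, and the conjugation by $B$ converts the upstairs characteristic polynomial into the rescaled one.

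\emph{Step 1.} By \cref{thm:kernel-transport}, $W_{2k-2}\subseteq K_{2k-2}$ is stable under $A_{n,2k-2}$. The map $B:K^\Lambda_{2k}\to W_{2k-2}$ is a linear isomorphism, by \cref{cor:Binj-kernel}, so $\dim W_{2k-2}=r_k$. Kernel-strict lowering gives
\[
  A_{n,2k-2}|_{W_{2k-2}}=\tfrac1n\,B\,A_{n,2k}|_{K^\Lambda_{2k}}\,B^{-1},
\]
where $B^{-1}$ denotes the inverse of the isomorphism onto $W_{2k-2}$.

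\emph{Step 2.} From Step 1, the characteristic polynomial of $A_{n,2k-2}|_{W_{2k-2}}$ equals that of $\frac1n A_{n,2k}|_{K^\Lambda_{2k}}$. For an $r\times r$ matrix $M$,
\[
  \det\bigl(XI-\tfrac1nM\bigr)=n^{-r}\det(nXI-M)=n^{-r}\chi_M(nX).
\]
So the restricted characteristic polynomial is exactly $n^{-r_k}\chi^\Lambda_{n,k}(nX)$, and it is monic of degree $r_k$.

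\emph{Step 3.} Choose a basis of $W_{2k-2}$ and extend it to a basis of $K_{2k-2}$. Because $W_{2k-2}$ is stable under $A_{n,2k-2}$, the matrix of $A_{n,2k-2}|_{K_{2k-2}}$ in this basis is block upper triangular. The diagonal blocks are the restriction to $W_{2k-2}$ and the induced map on $K_{2k-2}/W_{2k-2}$. Hence $\chi^K_{n,k-1}(X)$ factors as the restricted characteristic polynomial times the quotient characteristic polynomial, which gives the divisibility in $\QQ[X]$.

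I expect no real obstacle. The only point to check is that the operators really preserve $K_{2k-2}$, so that $\chi^K_{n,k-1}$ is defined. This holds for any exact lift by \cref{thm:block-classification}: exactness forces $A_{n,2k-2}(K_{2k-2})\subseteq K_{2k-2}$. All spaces are finite-dimensional over $\QQ$, so the block-triangular factorization is valid over $\QQ$.
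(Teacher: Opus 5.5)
Your proposal is correct and follows the paper's proof essentially step for step. Like the paper, you conjugate through the isomorphism $B:K^\Lambda_{2k}\to W_{2k-2}$ to identify $n^{-r_k}\chi^\Lambda_{n,k}(nX)$ with the characteristic polynomial of $A_{n,2k-2}|_{W_{2k-2}}$, and then factor $\chi^K_{n,k-1}$ via the block upper triangular form coming from the invariant subspace. Your remark that exactness alone forces $A_{n,2k-2}(K_{2k-2})\subseteq K_{2k-2}$ spells out a point the paper uses tacitly when it defines $\chi^K_{n,k-1}$.
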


\begin{proof}
By Theorem~\ref{thm:kernel-transport}, the map
\[
  B:K^\Lambda_{2k}\longrightarrow W_{2k-2}
\]
is an isomorphism and satisfies the intertwining identity
\[
  B A_{n,2k}=n A_{n,2k-2}B
  \qquad\text{on }K^\Lambda_{2k}.
\]
Thus, after transporting the action through $B$, the operator $A_{n,2k}$ on $K^\Lambda_{2k}$ is conjugate to the operator $nA_{n,2k-2}$ on $W_{2k-2}$.  Therefore
\[
  \chi^\Lambda_{n,k}(X)
  =\det\left(XI-nA_{n,2k-2}|_{W_{2k-2}}\right).
\]
Let
\[
  \rho_{n,k-1}(X)=\det\left(XI-A_{n,2k-2}|_{W_{2k-2}}\right).
\]
If $r_k=\dim W_{2k-2}=\dim K^\Lambda_{2k}$, then
\[
\begin{aligned}
  n^{-r_k}\chi^\Lambda_{n,k}(nX)
  &=n^{-r_k}\det\left(nXI-nA_{n,2k-2}|_{W_{2k-2}}\right)\\
  &=n^{-r_k}n^{r_k}\det\left(XI-A_{n,2k-2}|_{W_{2k-2}}\right)\\
  &=\rho_{n,k-1}(X).
\end{aligned}
\]
Finally, $W_{2k-2}$ is invariant under $A_{n,2k-2}$ inside the full kernel $K_{2k-2}$.  Choosing a basis of $K_{2k-2}$ beginning with a basis of $W_{2k-2}$ makes the matrix of $A_{n,2k-2}|_{K_{2k-2}}$ block upper triangular.  Its characteristic polynomial is therefore the product of $\rho_{n,k-1}(X)$ with the characteristic polynomial of the induced operator on the quotient $K_{2k-2}/W_{2k-2}$.  Hence $\rho_{n,k-1}(X)$, equivalently $n^{-r_k}\chi^\Lambda_{n,k}(nX)$, divides $\chi^K_{n,k-1}(X)$.
\end{proof}

The divisibility has $p$-adic consequences; compare the role of $p$-adic $q$-brackets in \cite{GriffinJamesonTrebatLeder}.

\begin{corollary}[$p$-adic constraints on kernel eigenvalues]
\label{cor:padic}
Let $p$ be prime, and suppose the lower-weight action on $W_{2k-2}$ preserves a $\ZZ_p$-lattice.  Write
\[
  \chi^\Lambda_{p,k}(X)=X^{r_k}+a_1X^{r_k-1}+\cdots+a_{r_k}.
\]
Then $a_j\in p^j\ZZ_p$ for $1\le j\le r_k$, and every eigenvalue $\alpha_p$ of $A_{p,2k}$ on $K^\Lambda_{2k}$ satisfies $v_p(\alpha_p)\ge1$; that is, no eigenvector in $K^\Lambda_{2k}$ is ordinary at $p$.  In particular, if $K^\Lambda_{2k}\neq0$, the genuine kernel action is never the Eisenstein scalar action $A_{p,2k}|_{K^\Lambda_{2k}}=(1+p^{2k-1})\Id$.
\end{corollary}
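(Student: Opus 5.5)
The plan is to combine the conjugacy established in the proof of \cref{thm:spectral-divisibility} with the integrality of characteristic polynomials of lattice-preserving operators. By \cref{thm:kernel-transport}, $B:K^\Lambda_{2k}\to W_{2k-2}$ is an isomorphism with $BA_{p,2k}=pA_{p,2k-2}B$ on $K^\Lambda_{2k}$. Hence $A_{p,2k}|_{K^\Lambda_{2k}}$ is conjugate over $\QQ$ to $pA_{p,2k-2}|_{W_{2k-2}}$, and
\[
  \chi^\Lambda_{p,k}(X)=\det\bigl(XI-pA_{p,2k-2}|_{W_{2k-2}}\bigr)=p^{r_k}\rho_{p,k-1}(X/p),
\]
where $\rho_{p,k-1}$ is the characteristic polynomial of $A_{p,2k-2}$ on $W_{2k-2}$, as in the proof of \cref{thm:spectral-divisibility}.

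Next I use the lattice hypothesis. If $A_{p,2k-2}$ preserves a $\ZZ_p$-lattice $\Lambda_0\subset W_{2k-2}\otimes\QQ_p$ of full rank $r_k$, then in a $\ZZ_p$-basis of $\Lambda_0$ its matrix $M$ has entries in $\ZZ_p$. So $\rho_{p,k-1}(X)=X^{r_k}+b_1X^{r_k-1}+\cdots+b_{r_k}$ with $b_j\in\ZZ_p$, since the $b_j$ are, up to sign, sums of principal $j\times j$ minors of $M$. Comparing coefficients in $p^{r_k}\rho_{p,k-1}(X/p)$ gives $a_j=p^jb_j\in p^j\ZZ_p$.

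For the eigenvalues, observe that the eigenvalues of $A_{p,2k}|_{K^\Lambda_{2k}}$ are $p\beta$, where $\beta$ runs over the eigenvalues of $M$. Each $\beta$ is integral over $\ZZ_p$, being a root of a monic polynomial with $\ZZ_p$-coefficients, so $v_p(\beta)\ge0$ and therefore $v_p(p\beta)\ge1$. Alternatively, the Newton polygon of $\chi^\Lambda_{p,k}$ has all slopes at least $1$, since $v_p(a_j)\ge j$.

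The final claim then follows directly. If $K^\Lambda_{2k}\ne0$ and $A_{p,2k}$ acted on it as $(1+p^{2k-1})\Id$, its eigenvalue would satisfy $v_p(1+p^{2k-1})=0$, which contradicts $v_p\ge1$.

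The only delicate point is the interpretation of the hypothesis. ``Preserves a $\ZZ_p$-lattice'' has to mean a full-rank lattice in $W_{2k-2}\otimes_\QQ\QQ_p$, so that the characteristic polynomial computed over $\QQ_p$ agrees with $\rho_{p,k-1}$. I would state this explicitly, and I would note that the conjugacy holds over $\QQ$ and hence survives extension of scalars. Beyond that, the argument is routine.
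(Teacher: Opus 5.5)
Your proposal is correct and follows essentially the same route as the paper. Both arguments use the conjugacy of $A_{p,2k}|_{K^\Lambda_{2k}}$ with $pA_{p,2k-2}|_{W_{2k-2}}$ from \cref{thm:kernel-transport}, integrality over $\ZZ_p$ from the lattice hypothesis, and the fact that $1+p^{2k-1}$ is a $p$-adic unit. Your direct comparison $a_j=p^jb_j$, with $b_j\in\ZZ_p$ the characteristic-polynomial coefficients of an integral matrix, makes explicit a step the paper leaves implicit: the paper states that the elementary symmetric functions of the $p\beta_i$ lie in $p^j\ZZ_p$ without justifying it.
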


\begin{proof}
By Theorem~\ref{thm:kernel-transport}, the eigenvalues of $A_{p,2k}$ on $K^\Lambda_{2k}$ are $p\beta_1,\ldots,p\beta_{r_k}$, where the $\beta_i$ are eigenvalues of the lower-weight action on $W_{2k-2}$.  Since that action preserves a $\ZZ_p$-lattice, the $\beta_i$ are integral over $\ZZ_p$.  Therefore $v_p(p\beta_i)\ge1$ for each $i$, and the coefficient $a_j$, which is up to sign the $j$th elementary symmetric polynomial in the $p\beta_i$, lies in $p^j\ZZ_p$.  The Eisenstein scalar $1+p^{2k-1}$ is a $p$-adic unit, so it cannot occur as an eigenvalue.
\end{proof}
\begin{remark}
\label{rmk:transport-significance}
\cref{thm:spectral-divisibility} and \cref{cor:padic} are the first place where injectivity of $B$ yields arithmetic rather than linear-algebraic information.  Injectivity alone says that $K^\Lambda_{2k}$ embeds in $K_{2k-2}$.  Kernel-strict lowering upgrades that embedding to an intertwiner, and the twist by $n$ in \eqref{eq:dT-twist} is then carried into the spectrum, so that an eigenvalue of $A_{n,2k}$ on the genuine kernel is $n$ times an eigenvalue of $A_{n,2k-2}$ below.  Thus the $q$-bracket kernels in adjacent weights are not spectrally independent, and every divisibility statement above descends from that single factor of $n$.  However, the only lifts constructed here that satisfy kernel-strict lowering are the scalar-kernel lifts of \cref{sec:scalar}, whose genuine kernel action is a scalar, so these results should be read as constraints on the hypothetical natural lifts of \cref{q:intrinsic} rather than as statements about lifts already in hand.
\end{remark}
\begin{proposition}[Transported subspaces in large weights]
\label{thm:infinite-subspaces}
Under the hypotheses of Theorem~\ref{thm:kernel-transport}, the lower formal kernels $K_{2k-2}$ contain nonzero proper Hecke-stable subspaces
\[
  W_{2k-2}=B(K^\Lambda_{2k})
\]
for all sufficiently large $k$.
\end{proposition}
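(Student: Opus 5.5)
The statement has three parts: \(W_{2k-2}\) is Hecke-stable, it is nonzero, and it is proper inside \(K_{2k-2}\). Stability under every \(A_{n,2k-2}\) is already part of \cref{thm:kernel-transport}, so I will take it from there. By \cref{cor:Binj-kernel}, \(\dim W_{2k-2}=\dim K^\Lambda_{2k}\). The whole proof therefore reduces to two dimension estimates for large \(k\):
\[
  \dim K^\Lambda_{2k}\ge1
  \qquad\text{and}\qquad
  \dim K^\Lambda_{2k}<\dim K_{2k-2}.
\]

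For nonvanishing, I will use the counts from \cref{sec:intro}. We have \(\dim L_{2k}=p(2k)-p(2k-1)\), since removing the parts equal to \(1\) gives partitions of \(2k\) with all parts at least \(2\). Also \(\dim I_{2k}\le\dim\QM_{2k}=O(k^2)\). Hence
\[
  \dim K^\Lambda_{2k}\ge\bigl(p(2k)-p(2k-1)\bigr)-\dim\QM_{2k}.
\]
The Hardy--Ramanujan asymptotic gives that \(p(n)-p(n-1)\) grows faster than any polynomial in \(n\). So the right side is positive for all large \(k\), and \(W_{2k-2}\neq0\).

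For properness, I will use the decomposition \(R_{2k-2}=L_{2k-2}\oplus Q_1R_{2k-3}\). Every monomial in \(Q_1R_{2k-3}\) contains \(Q_1\) and so has bracket zero, which gives \(Q_1R_{2k-3}\subseteq K_{2k-2}\). Consequently
\[
  \dim K_{2k-2}\ge\dim R_{2k-3}=p(2k-3).
\]
On the other side, \(\dim K^\Lambda_{2k}\le\dim L_{2k}=p(2k)-p(2k-1)\). It therefore suffices to show
\[
  p(2k)-p(2k-1)<p(2k-3)
\]
for large \(k\). From the asymptotic, \(p(n)-p(n-1)\sim p(n)\,\pi/\sqrt{6n}\). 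Also \(p(n-3)/p(n)\to1\). So the left side is \(o(p(2k-3))\), and the strict inequality holds eventually.

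I expect the main obstacle to be making the asymptotic comparison in the properness step rigorous rather than heuristic. My first approach is to quote the Hardy--Ramanujan formula and the ratio \(p(n)/p(n-1)=1+O(n^{-1/2})\). If that proves awkward, I have a cleaner combinatorial fallback. Removing one part equal to \(2\) injects the partitions of \(n\) with all parts at least \(2\) and at least one part equal to \(2\) into the partitions of \(n-2\). The partitions of \(n\) with all parts at least \(3\) are then few by comparison. This bounds \(\dim L_{2k}\) well below \(p(2k-3)\).
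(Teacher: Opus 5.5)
Your proposal is correct and follows the paper's proof step for step: stability is quoted from \cref{thm:kernel-transport}, nonvanishing comes from comparing $\dim L_{2k}=p(2k)-p(2k-1)$ with $\dim\QM_{2k}=O(k^2)$ and then using injectivity of $B$, and properness comes from $Q_1R_{2k-3}\subseteq K_{2k-2}$ together with $(p(2k)-p(2k-1))/p(2k-3)\to0$ by Hardy--Ramanujan. Your rigor concern is milder than you fear, because the leading-order asymptotic for $p(n)$ already gives $p(n-1)/p(n)\to1$ and $p(n-3)/p(n)\to1$, which is all the properness ratio needs; the combinatorial fallback is therefore unnecessary, and as stated it would not help anyway, since injecting into partitions of $n-2$ only bounds $\dim L_{n}$ by a quantity of size at least $p(n-2)>p(n-3)$.
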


\begin{proof}
We have
\[
  \dim L_{2k}=p(2k)-p(2k-1),
\]
where $p(m)$ is the partition function, while
\[
  \dim \QM_{2k}=\#\{(a,b,c)\in\ZZ_{\ge0}^3:a+2b+3c=k\}=O(k^2).
\]
By the Hardy--Ramanujan asymptotic \cite{HardyRamanujan}, $p(2k)-p(2k-1)$ grows exponentially in $\sqrt{k}$ divided by a polynomial factor, so $\dim K^\Lambda_{2k}>0$ for all sufficiently large $k$.  Injectivity of $B$ then gives $W_{2k-2}\neq0$.

For properness, the formal kernel $K_{2k-2}$ contains the entire subspace $Q_1R_{2k-3}$, so
\[
  \dim K_{2k-2}\ge p(2k-3).
\]
On the other hand,
\[
  \dim W_{2k-2}=\dim K^\Lambda_{2k}\le \dim L_{2k}=p(2k)-p(2k-1).
\]
Again by Hardy--Ramanujan \cite{HardyRamanujan},
\[
  \frac{p(2k)-p(2k-1)}{p(2k-3)}\to0.
\]
Thus $\dim W_{2k-2}<\dim K_{2k-2}$ for all sufficiently large $k$.  Stability was proved in Theorem~\ref{thm:kernel-transport}.
\end{proof}

\section{Scalar-kernel exact lifts}
\label{sec:scalar}

This section constructs the scalar-kernel lifts of \cref{cor:main-16}.  Recall that $\sigma_r(n)=\sum_{d\mid n}d^r$.  The divisor-sum identity
\begin{equation}
  \sigma_r(m)\sigma_r(n)=\sum_{d\mid(m,n)}d^r\sigma_r(mn/d^2)
  \label{eq:sigma-id}
\end{equation}
is called the Eisenstein Hecke relation.

\begin{lemma}[Scalar-kernel identities]
\label{lem:lambda-identities}
For $k\ge2$, set
\[
  \lambda_{n,2k}=n^{k-2}\sigma_3(n).
\]
Then
\[
  \lambda_{m,2k}\lambda_{n,2k}
  =\sum_{d\mid(m,n)}d^{2k-1}\lambda_{mn/d^2,2k},
\]
and for $k\ge3$,
\[
  \lambda_{n,2k}=n\lambda_{n,2k-2}.
\]
\end{lemma}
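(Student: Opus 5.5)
The plan is to reduce both identities to the Eisenstein Hecke relation \eqref{eq:sigma-id} with $r=3$, by tracking powers of $m$, $n$ and $d$ carefully.

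For the first identity, expand the left side as
\[
  \lambda_{m,2k}\lambda_{n,2k}=(mn)^{k-2}\sigma_3(m)\sigma_3(n),
\]
and apply \eqref{eq:sigma-id} with $r=3$ to obtain
\[
  (mn)^{k-2}\sum_{d\mid(m,n)}d^3\sigma_3(mn/d^2).
\]
For each divisor $d$ of $(m,n)$, write $(mn)^{k-2}=d^{2k-4}(mn/d^2)^{k-2}$. The $d$-th summand then becomes
\[
  d^{2k-4}\,d^3\,(mn/d^2)^{k-2}\sigma_3(mn/d^2)=d^{2k-1}\lambda_{mn/d^2,2k},
\]
which is exactly the right-hand side. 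The only point needing care is the exponent bookkeeping: the shift $k-2$ in $n^{k-2}$ is what converts the weight-$4$ factor $d^3$ into the weight-$2k$ factor $d^{2k-1}$, since $3+2(k-2)=2k-1$.

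For the second identity, assume $k\ge3$, so that $\lambda_{n,2k-2}$ is defined by the same formula with $k-1\ge2$. Then
\[
  n\lambda_{n,2k-2}=n\cdot n^{k-3}\sigma_3(n)=n^{k-2}\sigma_3(n)=\lambda_{n,2k}.
\]

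There is no real obstacle here. The first identity involves the only nontrivial input, and that input is \eqref{eq:sigma-id}, which the paper takes as known. If a self-contained argument were wanted, one would verify \eqref{eq:sigma-id} by multiplicativity: reduce to prime powers $m=p^a$, $n=p^b$ and check the resulting geometric-sum identity directly. Equivalently, one could read it off from $T_{n,4}E_4=\sigma_3(n)E_4$ together with \eqref{eq:hecke-comp} in weight $4$.
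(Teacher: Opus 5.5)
Your proposal is correct and is essentially the paper's proof: apply \eqref{eq:sigma-id} with $r=3$ and match the powers of $d$ via $(mn)^{k-2}=d^{2k-4}(mn/d^2)^{k-2}$. The paper does the exponent count from the right-hand side ($2k-1-2(k-2)=3$), but it is the same computation, and the adjacent identity $\lambda_{n,2k}=n\lambda_{n,2k-2}$ is immediate in both.
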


\begin{proof}
Using \eqref{eq:sigma-id} with $r=3$,
\[
  \lambda_{m,2k}\lambda_{n,2k}
  =(mn)^{k-2}\sum_{d\mid(m,n)}d^3\sigma_3(mn/d^2).
\]
On the other hand,
\[
  \sum_{d\mid(m,n)}d^{2k-1}\lambda_{mn/d^2,2k}
  =\sum_{d\mid(m,n)}d^{2k-1}\left(\frac{mn}{d^2}\right)^{k-2}\sigma_3(mn/d^2),
\]
and the power of $d$ is $2k-1-2(k-2)=3$.  The adjacent identity is immediate from
\[
  n^{k-2}\sigma_3(n)=n\cdot n^{k-3}\sigma_3(n).
\]
\end{proof}

\begin{definition}[Scalar-kernel lift]
\label{def:transported-scalar}
Let $2k\ge4$ and assume $I_{2k}$ is Hecke-stable.  Choose a section
\[
  s_{2k}:I_{2k}\to R_{2k}
\]
of $q_{2k}$.  The scalar-kernel operator is the map
\[
  A^{\mathrm{sc}}_{n,2k}\bigl(s_{2k}(\phi)+u\bigr)
  =s_{2k}(T_{n,2k}\phi)+\lambda_{n,2k}u,
  \qquad
  \phi\in I_{2k},\ u\in K_{2k}.
\]
\end{definition}

\begin{theorem}[Scalar-kernel exact lift]
\label{thm:scalar-lift}
The operator $A^{\mathrm{sc}}_{n,2k}$ is an exact lift of $T_{n,2k}$, and the family $\{A^{\mathrm{sc}}_{n,2k}\}_{n\ge1}$ satisfies the Hecke algebra relations in weight $2k$.
\end{theorem}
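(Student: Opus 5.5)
The plan is to realize $A^{\mathrm{sc}}_{n,2k}$ as a special case of the block form in \cref{thm:block-classification}, and then check the conditions of \cref{thm:cocycle}. With respect to the splitting $R_{2k}=s_{2k}(I_{2k})\oplus K_{2k}$, \cref{def:transported-scalar} says that
\[
  A^{\mathrm{sc}}_{n,2k}=\begin{pmatrix}
  T^{\mathrm{quot}}_{n,2k}&0\\
  0&\lambda_{n,2k}\Id_{K_{2k}}
  \end{pmatrix}.
\]
Thus the correction term is $C_{n,2k}=0$ and the kernel action is $N_{n,2k}=\lambda_{n,2k}\Id_{K_{2k}}$. The map is well defined and linear, since the decomposition $s_{2k}(\phi)+u$ is unique and both $s_{2k}$ and $T_{n,2k}$ are linear.

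Hecke stability of $I_{2k}$ guarantees that $T_{n,2k}\phi\in I_{2k}$, so $s_{2k}(T_{n,2k}\phi)$ makes sense. The converse half of \cref{thm:block-classification} then gives exactness at once: $q_{2k}A^{\mathrm{sc}}_{n,2k}=T_{n,2k}q_{2k}$.

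For the Hecke relations we apply \cref{thm:cocycle} and check its two conditions.

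\begin{itemize}
\item[(i)] \textbf{Kernel relation.} Since $N_{n,2k}$ is the scalar $\lambda_{n,2k}$, the required identity
\[
  N_{m,2k}N_{n,2k}=\sum_{d\mid(m,n)}d^{2k-1}N_{mn/d^2,2k}
\]
reduces to the scalar identity
\[
  \lambda_{m,2k}\lambda_{n,2k}=\sum_{d\mid(m,n)}d^{2k-1}\lambda_{mn/d^2,2k},
\]
multiplied by $\Id_{K_{2k}}$. This is the first part of \cref{lem:lambda-identities}, which applies because $2k\ge4$, i.e.\ $k\ge2$.
\item[(ii)] \textbf{Cocycle identity.} Every $C$ vanishes, so both sides of the cocycle identity are zero.
\end{itemize}

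\cref{thm:cocycle} therefore shows that $\{A^{\mathrm{sc}}_{n,2k}\}_{n\ge1}$ is a full exact Hecke lift. As a consistency check, $\lambda_{1,2k}=1$, so $A^{\mathrm{sc}}_{1,2k}=\Id$.

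I do not expect a real obstacle. The only substantive input is the divisor-sum identity \eqref{eq:sigma-id} with $r=3$, and that is already packaged in \cref{lem:lambda-identities}. The one point worth stating explicitly is that the upper-left block of the relation uses the composition law \eqref{eq:hecke-comp} restricted to $I_{2k}$, which is valid because $I_{2k}$ is Hecke-stable; this is exactly where that hypothesis is used.
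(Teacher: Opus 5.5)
Your proposal is correct and follows essentially the paper's argument. Exactness holds because the kernel component is killed by $q_{2k}$ and $q_{2k}s_{2k}=\Id$, and the Hecke relations are checked blockwise: the quotient block uses Hecke stability of $I_{2k}$, the kernel block reduces to \cref{lem:lambda-identities}, and there is no off-diagonal term. The only difference is presentational: you route both steps through \cref{thm:block-classification,thm:cocycle} with $C_{n,2k}=0$ and $N_{n,2k}=\lambda_{n,2k}\Id_{K_{2k}}$, whereas the paper redoes these short computations inline.
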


\begin{proof}
Every element of $R_{2k}$ is written uniquely as $s_{2k}(\phi)+u$ with $\phi\in I_{2k}$ and $u\in K_{2k}$.  First check exactness.  Since $s_{2k}$ is a section and $u$ lies in the kernel,
\[
\begin{aligned}
  q_{2k}A^{\mathrm{sc}}_{n,2k}\bigl(s_{2k}(\phi)+u\bigr)
  &=q_{2k}\bigl(s_{2k}(T_{n,2k}\phi)+\lambda_{n,2k}u\bigr)\\
  &=T_{n,2k}\phi+\lambda_{n,2k}q_{2k}(u)\\
  &=T_{n,2k}\phi.
\end{aligned}
\]
But $q_{2k}(s_{2k}(\phi)+u)=\phi$, so this is precisely
\[
  q_{2k}A^{\mathrm{sc}}_{n,2k}=T_{n,2k}q_{2k}.
\]

It remains to check the Hecke algebra relations.  On the quotient part $s_{2k}(I_{2k})$, the operator is transported directly from the Hecke action on $I_{2k}$, so the quotient blocks satisfy the usual relation
\[
  T_{m,2k}T_{n,2k}=\sum_{d\mid(m,n)}d^{2k-1}T_{mn/d^2,2k}.
\]
On the kernel $K_{2k}$, the operator $A^{\mathrm{sc}}_{n,2k}$ is multiplication by the scalar $\lambda_{n,2k}$.  Therefore the kernel part of the Hecke relation is exactly the scalar identity
\[
  \lambda_{m,2k}\lambda_{n,2k}
  =\sum_{d\mid(m,n)}d^{2k-1}\lambda_{mn/d^2,2k},
\]
which is Lemma~\ref{lem:lambda-identities}.  There is no off-diagonal correction term in this construction.  Thus both the quotient and kernel blocks satisfy the required Hecke relations, and the family $\{A^{\mathrm{sc}}_{n,2k}\}_{n\ge1}$ is a full exact Hecke lift.
\end{proof}

\begin{corollary}[Strict lowering for scalar-kernel lifts]
\label{cor:scalar-strict}
Assume the scalar-kernel construction is made in adjacent weights $2k$ and $2k-2$, with $k\ge3$.  Then for every $u\in K_{2k}$,
\[
  BA^{\mathrm{sc}}_{n,2k}u=nA^{\mathrm{sc}}_{n,2k-2}Bu.
\]
In particular the scalar-kernel lifts preserve $K^\Lambda_{2k}$ and satisfy kernel-strict lowering on $K^\Lambda_{2k}$.
\end{corollary}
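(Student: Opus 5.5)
The plan is to compute both sides of the identity directly from \cref{def:transported-scalar}, using that every kernel element is acted on by a scalar. First I will fix the setting. The construction in weight $2k-2$ requires $2k-2\ge4$, which is where $k\ge3$ enters, together with Hecke-stability of $I_{2k-2}$ and a section $s_{2k-2}$. Nothing about the two chosen sections will be used except that their images are complementary to the kernels.

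Let $u\in K_{2k}$. Its decomposition relative to $R_{2k}=s_{2k}(I_{2k})\oplus K_{2k}$ is $s_{2k}(0)+u$, so \cref{def:transported-scalar} gives
\[
  A^{\mathrm{sc}}_{n,2k}u=\lambda_{n,2k}u,
  \qquad\text{hence}\qquad
  BA^{\mathrm{sc}}_{n,2k}u=\lambda_{n,2k}Bu
\]
by linearity of $B$. For the other side, I will invoke Zagier's identity \eqref{eq:zagier-lower}, as in \cref{cor:Binj-kernel}: $\qbr{Bu}=d\qbr{u}=0$, so $Bu\in K_{2k-2}$. This holds for every $u$ in the formal kernel $K_{2k}$, not only the genuine one. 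Thus $Bu$ has zero section component in weight $2k-2$, and
\[
  A^{\mathrm{sc}}_{n,2k-2}Bu=\lambda_{n,2k-2}Bu.
\]
The two sides then agree by the adjacent identity $\lambda_{n,2k}=n\lambda_{n,2k-2}$ of \cref{lem:lambda-identities}, which is valid for $k\ge3$.

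For the final assertion, $A^{\mathrm{sc}}_{n,2k}$ restricted to $K^\Lambda_{2k}\subseteq K_{2k}$ is multiplication by the scalar $\lambda_{n,2k}$. It therefore maps $K^\Lambda_{2k}$ into itself, so the genuine kernel is preserved. Specializing the identity just proved to $u\in K^\Lambda_{2k}$ gives kernel-strict lowering in the sense of \cref{def:kernel-strict}.

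I do not expect a real obstacle. The only point needing care is that $Bu$ lands in $K_{2k-2}$, so that the lower-weight scalar-kernel operator acts on it by a scalar with no section term. This is exactly Zagier's identity, and it makes the argument independent of the choice of $s_{2k-2}$.
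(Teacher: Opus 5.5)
Your proposal is correct and follows the paper's proof essentially line for line. Both sides reduce to scalar multiples of $Bu$ because $Bu\in K_{2k-2}$, and they agree by $\lambda_{n,2k}=n\lambda_{n,2k-2}$ from \cref{lem:lambda-identities}, while preservation of $K^\Lambda_{2k}$ follows from the scalar action. You also make explicit, via Zagier's identity \eqref{eq:zagier-lower}, the fact $Bu\in K_{2k-2}$, which the paper uses without comment.
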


\begin{proof}
For $u\in K_{2k}$,
\[
  BA^{\mathrm{sc}}_{n,2k}u=\lambda_{n,2k}Bu.
\]
Since $Bu\in K_{2k-2}$,
\[
  nA^{\mathrm{sc}}_{n,2k-2}Bu=n\lambda_{n,2k-2}Bu.
\]
The two expressions agree by Lemma~\ref{lem:lambda-identities}.  If $u\in K^\Lambda_{2k}$, then $A^{\mathrm{sc}}_{n,2k}u=\lambda_{n,2k}u$, so the genuine kernel is preserved.
\end{proof}

\begin{remark}
The scalar-kernel construction is noncanonical, since it depends on the section $s_{2k}$.  It supplies explicit exact lifts satisfying the strict kernel-lowering relation used in the transport theorem.
\end{remark}

\section{Finite rank evidence through weight 16}
\label{sec:computations}

This short section records the finite computational output used in the main text.  The details of the exact-rational matrix construction and the low-weight kernel vectors are placed in \cref{app:computational}.

\begin{proposition}[Finite rank certificate and genuine kernel dimensions through weight 16]
\label{prop:kerneldims}
For every even weight $2k\le16$, the exact rational computation gives
\[
  \qbr{L_{2k}}=\QM_{2k}.
\]
The dimensions are
\[
\begin{array}{c|rrrrrrr}
2k&4&6&8&10&12&14&16\\ \hline
\dim L_{2k}&2&4&7&12&21&34&55\\
\dim \QM_{2k}&2&3&4&5&7&8&10\\
\dim K^\Lambda_{2k}&0&1&3&7&14&26&45
\end{array}
\]
\end{proposition}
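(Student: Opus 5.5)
The statement is a finite computation, so the plan is to certify, for each even weight \(2k\le 16\), that the \(q\)-bracket matrix restricted to \(L_{2k}\) has rank \(\dim\QM_{2k}\). Everything is done in exact rational arithmetic. The dimension of the genuine kernel then follows by rank–nullity:
\[
\dim K^\Lambda_{2k}=\dim L_{2k}-\dim\QM_{2k}.
\]

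The first row of the table is pure combinatorics. A basis of \(L_{2k}\) is the set of monomials \(Q_{\lambda_1}\cdots Q_{\lambda_r}\) with all parts \(\lambda_i\ge2\) and total weight \(2k\). Their number is \(p(2k)-p(2k-1)\), which gives \(2,4,7,12,21,34,55\). The second row is the count of triples \((a,b,c)\) with \(a+2b+3c=k\), using the basis \(E_2^aE_4^bE_6^c\) of \(\QM_{2k}\).

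For the rank, I would not evaluate the Bloch–Okounkov brackets symbolically. Instead, compute each statistic directly on partitions. For every partition \(\lambda\) with \(|\lambda|\le N\), evaluate \(Q_j(\lambda)\) from \cref{def:shifted-generators} using the Bernoulli-type constants \(\beta_j\). Form the truncated numerator \(\sum_\lambda f(\lambda)q^{|\lambda|}\) and multiply by \(\prod_n(1-q^n)\) modulo \(q^{N+1}\). This gives the first \(N+1\) Fourier coefficients of \(\qbr{f}\) for each basis monomial \(f\).

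Separately, expand the monomial basis \(E_2^aE_4^bE_6^c\) to the same precision. Choose \(N\) at least the Sturm-type bound for \(\QM_{2k}\), e.g. \(N\ge \dim\QM_{2k}+\lfloor 2k/12\rfloor\); in practice \(N=20\) suffices. Above this precision, coefficient vectors of elements of \(\QM_{2k}\) are injective, which can be checked directly because the \(E\)-monomial coefficient matrix has full column rank.

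Since Bloch–Okounkov guarantees \(\qbr{f}\in\QM_{2k}\), solve for the coordinates of each \(\qbr{f}\) in the \(E\)-basis. This yields a \(\dim\QM_{2k}\times\dim L_{2k}\) rational matrix. Fraction-free Gaussian elimination over \(\QQ\) then shows it has full row rank. That gives surjectivity, and hence the kernel dimensions \(0,1,3,7,14,26,45\).

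As a sanity check, the weight-\(6\) null vector should reproduce \(g_6\) of \eqref{eq:g6-def}. The top-coefficient column should also match \(\mu\) via \eqref{eq:top-depth}.

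The main obstacle is justifying that finite truncation certifies the rank. The cleanest route is to exhibit, for each weight, an explicit \(\dim\QM_{2k}\)-element subset of monomials whose coordinate matrix has nonzero determinant; this is an exact rational certificate. Partition enumeration up to \(N\approx20\) is cheap. A cheaper alternative would be an inductive argument using \(Q_2\cdot\) and the top-depth formula, but that depends on the unproved triangularity mentioned for \cref{app:harmonic}, so I would rely on the computation.
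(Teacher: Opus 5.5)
Your plan is correct and is essentially the paper's argument. The paper also runs exact rational row reduction on the truncated $q$-bracket coefficient matrix over the monomial basis of $L_{2k}$ (it truncates at $q^{28}$), combines the result with the Bloch--Okounkov--Zagier inclusion $\qbr{L_{2k}}\subseteq\QM_{2k}$, and reads off the kernel dimensions by rank--nullity. The one difference is that your detour through a Sturm-type bound and $E$-coordinates is unnecessary, and the ``main obstacle'' you flag does not arise: truncation can only lower rank, so a truncated rank equal to $\dim\QM_{2k}$ already certifies $\qbr{L_{2k}}=\QM_{2k}$.
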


\begin{proof}
The computation is carried out over $\QQ$ using the monomial basis of $L_{2k}$ and the coefficient matrix of the q-brackets through $q^{28}$.  In the seven displayed weights, exact row reduction gives ranks
\[
  2,3,4,5,7,8,10,
\]
which agree with the dimensions of $\QM_{2k}$.  Since the Bloch--Okounkov--Zagier theorem gives $\qbr{L_{2k}}\subseteq\QM_{2k}$, the full q-bracket image has precisely this rank.  The kernel dimensions are therefore $\dim L_{2k}-\dim\QM_{2k}$.  The construction of the matrices and the exact rank certificate are given in \cref{app:computational}.
\end{proof}

\begin{corollary}[Exact scalar-kernel lifts through weight $16$]
\label{cor:through16}
For every even weight
\[
  2k=4,6,8,10,12,14,16,
\]
the scalar-kernel construction gives a full exact Hecke lift of the complete level-one quasimodular Hecke action on $R_{2k}$.  Between adjacent computed weights it satisfies kernel-strict lowering.
\end{corollary}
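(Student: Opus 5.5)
The corollary will follow by combining three earlier results, with no new computation. First, from \cref{prop:kerneldims}, for each even weight \(2k\in\{4,6,\ldots,16\}\) we have \(\qbr{L_{2k}}=\QM_{2k}\). Since every monomial containing \(Q_1\) has bracket zero, \(q_{2k}(R_{2k})=q_{2k}(L_{2k})\), so \(I_{2k}=\QM_{2k}\). The full space \(\QM_{2k}\) is stable under every \(T_{n,2k}\), because the Hecke operators act on quasimodular forms of weight \(2k\) (\cref{def:hecke}). Hence the Hecke-stability hypothesis of \cref{def:transported-scalar} holds in each of these weights, and \(T^{\mathrm{quot}}_{n,2k}\) is the complete level-one quasimodular Hecke operator \(T_{n,2k}\) on \(\QM_{2k}\).

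Next, fix in each such weight any linear section \(s_{2k}:\QM_{2k}\to R_{2k}\) of \(q_{2k}\). One concrete choice is to send a basis of \(\QM_{2k}\) to preimages in \(L_{2k}\) extracted from the rank certificate. Then \cref{thm:scalar-lift} applies directly, since \(2k\ge4\). It shows that \(\{A^{\mathrm{sc}}_{n,2k}\}_{n\ge1}\) consists of exact lifts satisfying \eqref{eq:hecke-comp}, that is, a full exact Hecke lift of the whole Hecke action on \(\QM_{2k}\), realized on \(R_{2k}\).

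For the adjacent-weight statement, take a pair \((2k-2,2k)\) with \(6\le 2k\le16\), so \(k\ge3\) and both weights lie in the computed range. \Cref{cor:scalar-strict} then gives \(BA^{\mathrm{sc}}_{n,2k}u=nA^{\mathrm{sc}}_{n,2k-2}Bu\) for all \(u\in K_{2k}\). In particular this holds on \(K^\Lambda_{2k}\), together with preservation of \(K^\Lambda_{2k}\), which is exactly kernel-strict lowering in the sense of \cref{def:kernel-strict}. The underlying scalar identity \(\lambda_{n,2k}=n\lambda_{n,2k-2}\) is \cref{lem:lambda-identities}.

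The only point that needs care, and the place I expect to be the main obstacle, is the input \(I_{2k}=\QM_{2k}\). It rests entirely on the exact-rational rank certificate. The bracket image is contained in \(\QM_{2k}\) by Bloch--Okounkov--Zagier, and a rank equal to \(\dim\QM_{2k}\), computed from truncated \(q\)-expansions, forces equality. A truncated expansion can only underestimate the rank, so the agreement of ranks is a valid certificate. Everything else is formal.
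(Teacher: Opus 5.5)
Your proposal is correct and is essentially the paper's own proof. The paper likewise uses \cref{prop:kerneldims} to get \(I_{2k}=\QM_{2k}\), which is Hecke-stable, then applies \cref{thm:scalar-lift} for the full exact lift and \cref{cor:scalar-strict} for kernel-strict lowering between adjacent weights. Your extra remarks only make explicit what the paper leaves implicit: that \(q_{2k}(R_{2k})=q_{2k}(L_{2k})\), that every adjacent pair in the range has \(k\ge 3\), and that a rank computed from truncated expansions can only underestimate the true rank.
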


\begin{proof}
By \cref{prop:kerneldims}, the q-bracket image is $\QM_{2k}$ in these weights, and $\QM_{2k}$ is Hecke-stable.  Apply \cref{thm:scalar-lift}.  The strict-lowering statement follows from \cref{cor:scalar-strict}.
\end{proof}

\begin{remark}
The finite rank certificate verifies the low-weight examples and identifies the weights in which the scalar-kernel construction gives a lift on the full quasimodular target.  It is a finite-weight computation, not an all-weight surjectivity theorem.
\end{remark}

\section{Discussion and open problems}
\label{sec:discussion}

The results above separate three related questions.  First, exact lifts exist formally whenever the q-bracket image is Hecke-stable, or surjective,  although this construction depends on a section of $q_{2k}:R_{2k}\to I_{2k}$.  Second, multiplicative lifts and strictly $Q_2$-semilinear lifts do not exist; this is consistent with the fact that Hecke operators on quasimodular forms are themselves nonmultiplicative.  Third, the scalar-kernel construction gives exact formal lifts satisfying strict kernel lowering assuming the q-bracket's image is stable, and the injectivity of $B$ converts that condition into eigenvalue and characteristic polynomial constraints on q-bracket kernels.

Several natural problems remain.

\begin{question}[All-weight surjectivity]
\label{q:surjectivity}
Is
\[
  \qbr{L_{2k}}=\QM_{2k}
\]
for every $k$?  The computations in this paper verify this through weight $16$.
\end{question}
We believe that the q-bracket is indeed surjective.  The dimension count is lopsided, since $\dim L_{2k}$ grows faster than any power of $k$ while $\dim\QM_{2k}$, the number of partitions of $k$ into at most three parts, grows quadratically.  However, a large source does not by itself force surjectivity, and Corollary~\ref{cor:harmonic-reduction} reduces the question to harmonic surjectivity together with depth triangularity of multiplication by $Q_2$.
\begin{question}[Intrinsic nonmultiplicative lifts]
\label{q:intrinsic}
Does there exist a canonical exact Hecke lift on $\Lam$ that is linear but nonmultiplicative, with explicit correction terms accounting for the defects in Theorems~\ref{thm:no-multiplicative} and~\ref{thm:no-q2-semi}?
\end{question}

\begin{question}[Classification of correction defects]
Can the $Q_2$-defect
\[
  \Delta_n(f)=A_{n,2k+2}(Q_2f)-\sigma_1(n)Q_2A_{n,2k}(f)
\]
be described intrinsically in terms of Zagier's differential operators or q-bracket kernel projections?
\end{question}

\begin{question}[Non-scalar kernel modules]
Classify Hecke module structures on $K_{2k}$ satisfying the cocycle identities and kernel-strict lowering.  The scalar-kernel family gives one baseline model, but the obstruction theorems suggest that more natural lifts, if they exist, must have nontrivial kernel-valued correction terms.
\end{question}

\appendix

\section{Exact computations through weight 16}
\label{app:computational}

This appendix records the finite exact-rational data behind \cref{prop:kerneldims}.  All ranks are computed over $\QQ$ from q-bracket coefficient matrices; no numerical approximation is used.

\subsection{Rank certificate}
For each even weight $2k\le16$, let $\mathcal B_{2k}$ be the monomial basis of $L_{2k}$ indexed by partitions of $2k$ with all parts at least $2$.  Let $M_{2k}^{(28)}$ be the matrix whose columns are the coefficients from $q^0$ through $q^{28}$ of the q-brackets $\qbr{Q_\alpha}$ for $Q_\alpha\in\mathcal B_{2k}$.  Exact row reduction gives
\[
\begin{array}{c|rrrrrrr}
2k&4&6&8&10&12&14&16\\ \hline
\dim L_{2k}&2&4&7&12&21&34&55\\
\rank_{\QQ}M_{2k}^{(28)}&2&3&4&5&7&8&10\\
\dim\QM_{2k}&2&3&4&5&7&8&10\\
\dim K^\Lambda_{2k}&0&1&3&7&14&26&45
\end{array}
\]
Since the Bloch--Okounkov--Zagier theorem gives $\qbr{L_{2k}}\subseteq\QM_{2k}$, equality of the computed rank with $\dim\QM_{2k}$ proves $\qbr{L_{2k}}=\QM_{2k}$ in the displayed weights.  This is the finite rank certificate used in \cref{prop:kerneldims}.

\subsection{Low-weight kernel data}
Van Ittersum's modularity criterion gives a natural harmonic description of modular q-brackets in the genuine shifted-symmetric space~\cite{vanIttersumModular}.  The nullspace vectors below are written in a computational monomial basis rather than in that harmonic basis.  In weight $6$, the vector $g_6$ lies in the span of the harmonic modular representatives associated in that notation to the partitions $(6)$ and $(3,3)$.  In weight $8$, the three-dimensional genuine kernel can similarly be reorganized using $g_6Q_2$ together with suitable combinations of the harmonic representatives associated to $(8)$, $(5,3)$, and $(4,4)$.  We keep the monomial-basis vectors because they make the exact rank certificate and the action of $B$ completely explicit.

The first genuine kernel occurs in weight $6$:
\[
  g_6=161Q_6-109Q_4Q_2+77Q_3^2+5Q_2^3,
  \qquad
  B(g_6)=32Q_3Q_1-15Q_2Q_1^2\neq0.
\]
In weight $8$, with ordered basis
\[
(Q_8,Q_6Q_2,Q_5Q_3,Q_4^2,Q_4Q_2^2,Q_3^2Q_2,Q_2^4),
\]
the exact nullspace has basis
\begin{align*}
 u_{8,1}={}&3382Q_8-1805Q_6Q_2+1133Q_5Q_3-297Q_4^2+35Q_4Q_2^2,\\
 u_{8,2}={}&3144Q_8-2680Q_6Q_2+111Q_5Q_3+676Q_4^2+35Q_3^2Q_2,\\
 u_{8,3}={}&5062Q_8+610Q_6Q_2+4598Q_5Q_3-3377Q_4^2+7Q_2^4.
\end{align*}
Applying $B=\frac12(D-\partial^2)$ gives
\begin{align*}
 B(u_{8,1})={}&-\frac72 g_6+\frac{2477}{2}Q_5Q_1-35Q_4Q_1^2-70Q_3Q_2Q_1,\\
 B(u_{8,2})={}&-7g_6+\frac{5249}{2}Q_5Q_1-105Q_3Q_2Q_1,\\
 B(u_{8,3})={}&14g_6-2909Q_5Q_1-42Q_2^2Q_1^2.
\end{align*}
These identities are obtained by exact row reduction and direct application of $B$ to the displayed monomial bases.  The vectors $g_6,u_{8,1},u_{8,2},u_{8,3}$ are included only as low-weight certificates and examples; the higher-dimensional kernel data used in the paper is summarized by the rank table above.

\begin{corollary}[Transported spaces through weight $16$]
For the scalar-kernel lift, the subspaces
\[
  B(K^\Lambda_{12})\subset K_{10},
  \qquad
  B(K^\Lambda_{14})\subset K_{12},
  \qquad
  B(K^\Lambda_{16})\subset K_{14}
\]
are Hecke-stable of dimensions $14,26,45$, respectively.
\end{corollary}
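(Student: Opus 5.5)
The statement follows by combining three earlier results, so the proof is a short assembly rather than a new argument.

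First, \cref{prop:kerneldims} gives $\qbr{L_{2k}}=\QM_{2k}$ in every even weight $2k\le 16$. Hence $I_{2k}=\QM_{2k}$ is Hecke-stable, and the scalar-kernel lifts $A^{\mathrm{sc}}_{n,2k}$ of \cref{def:transported-scalar} exist in each of these weights. By \cref{cor:scalar-strict}, for $k\ge3$ they preserve $K^\Lambda_{2k}$ and satisfy kernel-strict lowering between weights $2k$ and $2k-2$. For $2k=12,14,16$ both weights in the pair lie in the computed range, so the hypotheses of \cref{thm:kernel-transport} hold.

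Second, \cref{thm:kernel-transport} then says that $W_{2k-2}=B(K^\Lambda_{2k})$ is stable under every $A^{\mathrm{sc}}_{n,2k-2}$. Stability can also be checked directly. For $u\in K^\Lambda_{2k}$ we have $Bu\in K_{2k-2}$ by \cref{cor:Binj-kernel}, and $A^{\mathrm{sc}}_{n,2k-2}$ acts on $K_{2k-2}$ as the scalar $\lambda_{n,2k-2}$. So every subspace of $K_{2k-2}$ is stable, and in particular $W_{2k-2}$ is.

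Third, the dimensions come from \cref{thm:Binj}. That theorem applies because $2k\ge 4$, so $B$ is injective on $L_{2k}$ and hence on $K^\Lambda_{2k}$. This gives $\dim W_{2k-2}=\dim K^\Lambda_{2k}$, and reading the table of \cref{prop:kerneldims} yields the values $14,26,45$ for $2k=12,14,16$.

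There is no real obstacle; the only care needed is bookkeeping. One must check that both adjacent weights (e.g.\ $10$ and $12$) are covered by the surjectivity certificate, so the lifts are defined in both. One must also check that the dimension uses injectivity of $B$ on the genuine space, not on all of $R_{2k}$.
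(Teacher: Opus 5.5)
Your proposal is correct and follows the paper's own proof, which also reads the dimensions off \cref{prop:kerneldims}, uses injectivity of $B$ on genuine kernels (\cref{cor:Binj-kernel}), and gets Hecke-stability from \cref{thm:kernel-transport} together with \cref{cor:scalar-strict}. You also note that $A^{\mathrm{sc}}_{n,2k-2}$ acts as a scalar on all of $K_{2k-2}$, so every subspace is stable; this is valid and gives a more direct justification of the stability claim.
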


\begin{proof}
The dimensions are those in \cref{prop:kerneldims}.  Injectivity of $B$ on genuine kernels follows from \cref{cor:Binj-kernel}, and Hecke-stability follows from \cref{thm:kernel-transport} and \cref{cor:scalar-strict}.
\end{proof}

\section{Conditional harmonic reduction}
\label{app:harmonic}

This appendix records a conditional reduction, motivated by the harmonic approach to modular q-brackets in \cite{vanIttersumModular}.  Full shifted-symmetric surjectivity would follow from harmonic modular surjectivity together with depth triangularity of multiplication by $Q_2$.

Let
\[
  q_k:L_k\to\QM_k,
  \qquad q_k(f)=\qbr{f}.
\]
Let $M_k\subseteq \QM_k$ denote the modular subspace.  Let $\mathcal H_k$ denote the harmonic subspace in the shifted-symmetric decomposition of van Ittersum.

We use two standard structural inputs.  First, the shifted-symmetric algebra has a harmonic decomposition
\[
  L_k=\bigoplus_{r\ge0}Q_2^r\mathcal H_{k-2r}.
\]
Second, modular q-brackets are represented by harmonic elements modulo the q-bracket kernel: if $f\in L_k$ and $\qbr{f}$ is modular, then
\[
  f=h+\kappa,
  \qquad h\in\mathcal H_k,
  \quad \kappa\in\ker q_k.
\]

\begin{definition}[Depth triangularity]
We say that the q-bracket satisfies depth triangularity in weight $k$ if, for every $r\ge0$ and $h\in\mathcal H_{k-2r}$,
\[
  \qbr{Q_2^rh}=\gamma_{k,r}E_2^r\qbr{h}+\text{terms of smaller }E_2\text{-depth}
\]
with $\gamma_{k,r}\in\QQ^\times$.
\end{definition}

\begin{proposition}[Full surjectivity implies harmonic surjectivity]
\label{prop:full-to-harmonic}
If $q_k(L_k)=\QM_k$, then
\[
  q_k(\mathcal H_k)=M_k.
\]
\end{proposition}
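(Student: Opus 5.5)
The plan is to use the second structural input, that modular \(q\)-brackets are represented by harmonic elements modulo the kernel. The inclusion \(q_k(\mathcal H_k)\subseteq M_k\) is part of that harmonic framework: van Ittersum's criterion \cite{vanIttersumModular} says harmonic elements have modular brackets. I will take it as given, since the proposition is phrased as the nontrivial reverse inclusion. For the reverse inclusion, fix \(F\in M_k\subseteq\QM_k\). The hypothesis \(q_k(L_k)=\QM_k\) supplies some \(f\in L_k\) with \(\qbr{f}=F\).

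Since \(\qbr{f}=F\) is modular, the second structural input gives a decomposition \(f=h+\kappa\) with \(h\in\mathcal H_k\) and \(\kappa\in\ker q_k\). Applying \(q_k\) and using linearity together with \(q_k(\kappa)=0\) yields
\[
  q_k(h)=q_k(f)-q_k(\kappa)=F.
\]
Hence \(F\in q_k(\mathcal H_k)\), so \(M_k\subseteq q_k(\mathcal H_k)\).

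Combined with the inclusion \(q_k(\mathcal H_k)\subseteq M_k\), this gives equality. The argument requires no computation. The only point needing care is the forward inclusion, i.e.\ that harmonic brackets are modular. If it is not taken as part of the harmonic input, I would derive it from Zagier's identity \eqref{eq:zagier-lower}: harmonic elements are annihilated by the relevant lowering operator, so their brackets are killed by \(d\), and a quasimodular form killed by \(d\) has \(E_2\)-depth zero and is therefore modular. I expect this identification of the harmonic condition with vanishing under \(d\) to be the only real obstacle, and I would handle it by citing \cite{vanIttersumModular} directly.
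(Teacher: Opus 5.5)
Your argument is correct and is exactly the paper's proof: lift \(F\in M_k\) to some \(f\in L_k\) using full surjectivity, split \(f=h+\kappa\) by the modularity-criterion input, and read off \(q_k(h)=F\). The paper proves only the inclusion \(M_k\subseteq q_k(\mathcal H_k)\) and leaves the forward inclusion \(q_k(\mathcal H_k)\subseteq M_k\) implicit in the harmonic framework, so flagging that inclusion and deferring it to van Ittersum is a reasonable extra precaution.
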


\begin{proof}
Let $F\in M_k$.  By full surjectivity choose $f\in L_k$ with $\qbr{f}=F$.  Since $F$ is modular, the modularity criterion gives $f=h+\kappa$ with $h\in\mathcal H_k$ and $\kappa\in\ker q_k$.  Therefore
\[
  F=\qbr{f}=\qbr{h}.
\]
\end{proof}

\begin{theorem}[Harmonic surjectivity implies full shifted-symmetric surjectivity]
\label{thm:harmonic-to-full}
Assume depth triangularity in weight $k$.  Suppose that
\[
  q_j(\mathcal H_j)=M_j
\]
for every $j\le k$ of the same parity as $k$.  Then
\[
  q_k(L_k)=\QM_k.
\]
\end{theorem}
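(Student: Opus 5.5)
The plan is to induct on the \(E_2\)-depth, using the structure of \(\QM_k\) as a polynomial ring in \(E_2\) over modular forms. Every \(F\in\QM_k\) has a unique expansion
\[
  F=\sum_{r\ge0}E_2^rF_r,\qquad F_r\in M_{k-2r},
\]
and the depth of \(F\) is the largest \(r\) with \(F_r\neq0\). I will prove by induction on \(s\) that every \(F\in\QM_k\) of depth at most \(s\) lies in \(q_k(L_k)\). Since depths are bounded by \(k/2\), this gives \(q_k(L_k)=\QM_k\).

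The base case \(s=0\) is immediate: \(F\) is then modular, so \(F\in M_k=q_k(\mathcal H_k)\subseteq q_k(L_k)\) by the hypothesis with \(j=k\), using \(\mathcal H_k\subseteq L_k\) (the \(r=0\) summand of the harmonic decomposition).

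For the inductive step, let \(F\) have depth exactly \(s\ge1\) with top coefficient \(F_s\in M_{k-2s}\). The weight \(k-2s\le k\) has the same parity as \(k\), so the hypothesis gives \(h\in\mathcal H_{k-2s}\) with \(\qbr{h}=F_s\). The element \(Q_2^sh\) lies in \(L_k\), in the summand \(Q_2^s\mathcal H_{k-2s}\) of the harmonic decomposition. Depth triangularity in weight \(k\) gives
\[
  \qbr{Q_2^sh}=\gamma_{k,s}E_2^sF_s+(\text{depth}<s),\qquad \gamma_{k,s}\in\QQ^\times.
\]
Set \(G=F-\gamma_{k,s}^{-1}\qbr{Q_2^sh}\). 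Then \(G\in\QM_k\) by the Bloch--Okounkov--Zagier theorem, and the \(E_2^s\) terms cancel, so \(G\) has depth at most \(s-1\). By induction \(G=\qbr{g}\) for some \(g\in L_k\), and therefore
\[
  F=\qbr{\,g+\gamma_{k,s}^{-1}Q_2^sh\,}\in q_k(L_k).
\]

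The main point needing care is the bookkeeping of what "\(E_2^r\qbr{h}\) plus terms of smaller depth" means when \(\qbr{h}\) is modular. I will read it as saying that the \(E_2^r\)-coefficient in the unique expansion above is exactly \(\gamma_{k,r}\qbr{h}\), and that all remaining terms have strictly smaller depth. This is precisely what makes the subtraction lower the depth. One also needs \(\qbr{h}\) itself to be modular, which holds because \(\qbr{h}=F_s\) by construction. Beyond this the argument is a finite triangular elimination, and no step seems to present real difficulty. The only hypotheses used are the harmonic decomposition (to place \(Q_2^sh\) in \(L_k\)) and depth triangularity for the pairs \((k,s)\) that occur.
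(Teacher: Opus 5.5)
Your proof is correct and is essentially the paper's argument: both remove the top coefficient $F_R$ of the $E_2$-depth expansion by choosing a harmonic preimage, multiplying it by $Q_2^R$, and inducting on depth. The differences are cosmetic: the paper scales the harmonic preimage by $\gamma_{k,R}^{-1}$ while you scale the subtracted bracket, and you state explicitly the base case and the fact that $G\in\QM_k$, which the paper leaves implicit.
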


\begin{proof}
Every quasimodular form of weight $k$ has a finite depth expansion
\[
  F=\sum_{r=0}^{\lfloor k/2\rfloor}E_2^rF_r,
  \qquad F_r\in M_{k-2r}.
\]
We prove that $F\in q_k(L_k)$ by descending induction on the largest $r$ for which $F_r\neq0$.  Let $R$ be that largest index.  By harmonic surjectivity in weight $k-2R$, choose $h_R\in\mathcal H_{k-2R}$ with
\[
  \qbr{h_R}=\gamma_{k,R}^{-1}F_R.
\]
Depth triangularity gives
\[
  \qbr{Q_2^Rh_R}=E_2^RF_R+\text{terms of smaller depth}.
\]
Subtracting this q-bracket from $F$ removes the top-depth term.  Repeating the process eventually expresses $F$ as a q-bracket of an element of $L_k$.
\end{proof}

\begin{corollary}[Reduction of all-weight surjectivity]
\label{cor:harmonic-reduction}
Assume depth triangularity in all weights.  Then
\[
  q_k(L_k)=\QM_k\quad\text{for all }k
\]
if and only if
\[
  q_k(\mathcal H_k)=M_k\quad\text{for all }k.
\]
\end{corollary}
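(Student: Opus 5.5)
The statement is an equivalence, and I will prove the two directions separately by applying the two results just established in each weight $k$.

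For the forward direction, assume $q_k(L_k)=\QM_k$ for all $k$. Then \cref{prop:full-to-harmonic} applies in each weight and gives $q_k(\mathcal H_k)=M_k$. This direction does not use depth triangularity. It uses only the standing modularity input, namely that any $f$ with modular bracket decomposes as a harmonic element plus a kernel element.

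For the reverse direction, assume $q_j(\mathcal H_j)=M_j$ for every $j$. Fix $k$. Depth triangularity is assumed in all weights, so in particular it holds in weight $k$. The hypothesis of \cref{thm:harmonic-to-full} asks for harmonic surjectivity in every weight $j\le k$ of the parity of $k$, and this is a special case of the all-weight assumption. The theorem then yields $q_k(L_k)=\QM_k$. Since $k$ was arbitrary, the equivalence follows.

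Two bookkeeping points need care.

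\begin{enumerate}[label=\textup{(\alph*)}]
\item \textbf{Odd weights.} Here $\QM_k=M_k=0$, so both sides hold trivially. The statement is really only about even $k$.
\item \textbf{Small weights.} In weight $0$, $L_0=\mathcal H_0=\QQ$ and $\QM_0=M_0=\QQ$, so the base cases in the descending induction of \cref{thm:harmonic-to-full} are consistent. In weight $2$, $M_2=0$, and the $E_2$ term comes from $Q_2\cdot\mathcal H_0$ via $\gamma_{2,1}\neq0$.
\end{enumerate}

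The main point to check is exactly this compatibility of the inductive hypotheses: \cref{thm:harmonic-to-full} consumes harmonic surjectivity in all lower weights of the same parity together with depth triangularity only in weight $k$. Both are supplied by the all-weight assumptions. I expect no genuine obstacle beyond confirming that the weight-$0$ and weight-$2$ harmonic pieces behave as described.
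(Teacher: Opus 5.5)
Your proposal is correct and is the paper's argument: the paper's proof likewise cites \cref{prop:full-to-harmonic} for one direction and \cref{thm:harmonic-to-full} weight by weight for the other. Your bookkeeping remarks on odd and low weights are harmless, and not needed for the argument.
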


\begin{proof}
One direction is Proposition~\ref{prop:full-to-harmonic}; the other is Theorem~\ref{thm:harmonic-to-full} weight by weight.
\end{proof}

\end{document}